\theoremstyle{plain}
\theoremstyle{plain}\newtheorem{theorem}{Theorem}[section]
\theoremstyle{plain}\newtheorem{lemma}[theorem]{Lemma}
\theoremstyle{plain}
\theoremstyle{plain}
\theoremstyle{plain}\newtheorem{remark}{Remark}[section]
\newcommand{\B}{\Big}
\newcommand{\be}{\begin{equation}}
\newcommand{\ee}{\end{equation}}
\newcommand{\ba}{\begin{aligned}}
	\newcommand{\ea}{\end{aligned}}
\newcommand{\f}{\frac}
\newcommand{\ben}{\begin{enumerate}}
	\newcommand{\een}{\end{enumerate}}
\newcommand{\Rmnum}[1]{\expandafter\@slowromancap\romannumeral #1@}
\numberwithin{equation}{section}
\begin{document}
	\title{A new blow-up criterion for the 2D full compressible Navier-Stokes equations without heat conduction in a bounded domain  }
\author[J. Fan, Q. Jiu]{ Jie Fan$^{1}$, Quansen Jiu$^{2}$}
\address{$^1$ School of Mathematical Sciences, Capital Normal University, Beijing, 100048, P.R.China}

\email{fj0828@outlook.com}
\address{$^1$ School of Mathematical Sciences, Capital Normal University, Beijing, 100048, P.R.China}

\email{jiuqs@cnu.edu.cn}
	
\date{}
\subjclass[2000]{35B65, 35D30, 76D05}
\keywords{full compressible Navier-Stokes equations,  zero heat conduction, blow-up criteria}

\maketitle
\section*{abstract}
 This paper is to derive a new blow-up criterion for the 2D full compressible Navier-Stokes equations without heat conduction in terms of the
 density $\rho$ and the pressure $P$. More precisely, it indicates that in a bounded domain the strong solution exists globally if the norm
 $\|\rho||_{{L^\infty(0,t;L^{\infty})}}+||P||_{L^{p_0}(0,t;L^\infty)}<\infty$ for some constant $p_0$ satisfying $1<p_0\leq 2$. The boundary condition is imposed as a Navier-slip boundary one and the initial vacuum is permitted. Our result extends previous one which is stated as $\|\rho||_{{L^\infty(0,t;L^{\infty})}}+||P||_{L^{\infty}(0,t;L^\infty)}<\infty$.
		\section{Introduction}
	\label{intro}
	\setcounter{section}{1}\setcounter{equation}{0}
The full compressible Navier-Stokes equations  in $\Omega\subseteq\mathbb{R}^n$~($n=2,3)$~ bounded or unbounded read as:
 \begin{equation}\label{FNS}
\begin{cases}
	&\rho_t+\nabla \cdot (\rho u)=0, \\
&\rho  u_t+\rho u\cdot\nabla u+\nabla
P -\mu\Delta u-(\mu+\lambda)\nabla\text{div\,}u=0,\\
&c_{v}[\rho \theta_t+\rho u\cdot\nabla\theta] +P\text{div\,}u-\kappa\Delta\theta=\frac{\mu}{2}\left|\nabla u+(\nabla
u)^{\text{tr}}\right|^2+\lambda(\text{div\,}u)^2,\\
\end{cases}\end{equation}
with the following initial data
$$(\rho, u, \theta)=(\rho_0, u_0, \theta_0) \quad\text{in}~~\Omega, $$
and the  Dirichlet boundary condition
\begin{equation}\begin{aligned}\label{Dirichlet}
&u=0,~~\theta=0 \quad\text{on}~~\partial\Omega,
\end{aligned}\end{equation}
or the Neumann boundary condition on $\theta$,
\begin{equation}\begin{aligned}\label{Neumann}
&u=0,~~\frac{\partial\theta}{\partial n}=0, ~\quad\text{on}~\partial\Omega.
\end{aligned}\end{equation}
Here, $x=(x_1,...,x_n)\in\Omega$~is the spatial coordinate and ~$t> 0$~is the time,~$
\rho=\rho(x,t)$~and~$u=(u_1(x,t),...,u_n(x,t))$~stand for the flow density and velocity respectively, and the pressure $P$ is determined by
\begin{equation}\begin{aligned}\label{667}
P=R\rho\theta,R>0.
\end{aligned}\end{equation}
The positive constant $c_{v}$ and $\kappa$ are respectively the heat capacity, the thermal conductivity coefficient. $\mu$ and $\lambda$ are the coefficients of viscosity, which are assumed to be constants, satisfying the following
 physical restrictions:
\begin{equation}\label{nares}
\mu>0,\ 2\mu+n\lambda\ge0.\end{equation}
There are a large number of literatures about the well-posedness  to the compressible Navier-Stokes system \eqref{FNS}. Matsumura-Nishida \cite{[MN]} first obtained the global classical solution when initial data is close to a non-vacuum equilibrium and the solution has
small perturbations from a uniform nonvacuum state. Later, in the absence of vacuum the global existence of weak solutions for discontinuous initial data
was studied by~Hoff \cite{[DFD]}. For isentropic compressible Navier-Stokes equations, Lions \cite{[Lions]} obtained the existence of global weak solutions with ~$\gamma\geq\frac{3N}{N+2}$~for~$N=2, 3.$~ Feireisl \cite{[Feireisl Novotny]} extended Lion's result with~$\gamma>\frac{3}{2}$~and Jiang-Zhang \cite{[JPO]} relaxed the restriction for~$\gamma>1$~with spherical symmetric initial data. For the initial data far away from vacuum, Danchin \cite{[Danchin]} established the local well-posedness in critical
spaces. If vacuum is taken into account, the local existence of a strong solution with initial data containing  the compatibility condition was established by Cho and Kim
\cite{[CK]}. For the Cauchy problem or for the initial-boundary value problem of 3D and 2D barotropic compressible Navier-Stokes equations,
Huang-Li-Xin\cite{[HLX small energy]}, Li-Xin\cite{[LX2D small]} and  Cai-Li\cite{[JM]} established the global existence of classical solutions for initial data with small
energy but possibly large oscillations and containing vacuum states, respectively. Xin \cite{[Xin]} (see \cite{[li wang xin]},\cite{[JWX]},\cite{[XY]} for further states) first proved
that full compressible Navier-Stokes equations with $\kappa=0$ will blow up in finite time if the initial density has compact support. For this reason, many studies are devoted to
 the mechanism of blow up and structure of possible singularities of strong  solutions to the compressible Navier-Stokes equations.
More precisely, suppose that $0<T^*<\infty$ is the maximal time of existence of a strong solution of system \eqref{FNS}. Then when ~$\kappa>0,$~ kinds of   blow-up criteria have been established, of which some are listed as follows:

\noindent Fan-Jiang-Ou  \cite{[FJO]}:
$$
\lim\sup\limits_{t\rightarrow
T^*}\left(\|\nabla
u\|_{L^1(0,t;L^\infty(\mathbb{R}^{3}))}+\|\theta\|_{L^\infty(0,t;L^\infty(\mathbb{R}^{3}))}\right)=\infty,~~~(\lambda< 7\mu);
$$
Sun-Wang-Zhang \cite{[SWZarma]}:
$$
\lim\sup\limits_{t\rightarrow
T^*}\left( \|(\rho,\f{1}{\rho},\theta)\|_{L^\infty(0,t;L^\infty(\mathbb{R}^{3}))}\right)=\infty,~~~(\lambda< 7\mu);
$$
Jiu-Wang-Ye \cite{[JYWR]}:
\begin{align}
&\limsup_{t\rightarrow T^*}
  \B(\|\rho\|_{L^{\infty}(0,t;L^{\infty}(\mathbb{R}^{3}))}+\|\theta\|_{L^{p}(0,t;L^{q}(\mathbb{R}^{3}))}\B)= \infty,\label{JWY1}
  ~\text{with}~~\f{2}{p}+\f{3}{q}=2,\ \  q>\f{3}{2}~~ (\lambda<3\mu);\\
  &\limsup_{t\rightarrow T^*}\B(
  \|\rho\|_{L^{\infty}(0,t;L^{\infty}(\mathbb{R}^{3}))}+ \|\text{div\,}u \|_{L^{2}(0,t;L^{3}(\mathbb{R}^{3}))}
  +\|\theta\|_{L^{p}(0,t;L^{q}(\mathbb{R}^{3}))}\B)= \infty,\\ \label{JWY2}
  ~&\text{with}~~\f{2}{p}+\f{3}{q}=2,\ \  q>\f{3}{2};\nonumber
  \end{align}
Fang-Zi-Zhang \cite{[FZZ]}:
\be\label{FZZ}
\lim\sup\limits_{t\rightarrow
T^*}\B(\|\rho\|_{L^\infty(0,t;L^\infty(\Omega))}+\|\theta\|_{L^\infty(0,t;L^\infty(\Omega))}\B)
=\infty
\ee
for smooth bounded domain $\Omega$ in $\mathbb{R}^{2}$ and boundary condition \eqref{Dirichlet}.

\noindent Fan-Jiu-Wang \cite{[FJWX]}:
$$
\lim\sup\limits_{t\rightarrow
T^*}\B( \|\rho\|_{L^\infty(0,t;L^\infty(\Omega))}+\|\theta||_{L^p(0,t;L^q(\Omega))}\B)=\infty,~\text{with}~~\frac{1}{p}+\frac{1}{q}=1,\ \
2\leq q\leq\infty
$$
for smooth bounded domain $\Omega$ in $\mathbb{R}^{2}$ and  boundary condition \eqref{Neumann}.

\noindent Feireisl-Wen-Zhu  \cite{[Feireisl WENZHU]} removed the technical conditions in \cite{[JYWR]} relating the values of the shear and bulk viscosity coefficients, and proved
\begin{align}
  &\limsup_{t\rightarrow T^*}\B(
  \|\rho\|_{L^{\infty}(0,t;L^{\infty}(\Omega))}+\|\theta\|_{L^{p}(0,t;L^{q}(\Omega))}\B)= \infty,
  ~\text{with}~~\f{2}{p}+\f{3}{q}=2,\ \  q\in(\f{3}{2},\infty]\label{JWY2}
  \end{align}
 for~$\mathbb{R}^3$~or boundary condition \eqref{Neumann}.

This paper is denoted to establishing some blow-up criteria for the 2D full compressible Navier-Stokes equations without heat conduction ($\kappa=0$) in a bounded domain. Let $\Omega$ be a bounded smooth domain in $\mathbb{R}^2$. When~$\kappa=0$ and take~$c_v=R=1$ for simplicity, the system~\eqref{FNS}~can
be rewritten as
 \begin{equation}\label{FNSZ}
\begin{cases}
	&\rho_t+\nabla \cdot (\rho u)=0, \\
&\rho  u_t+\rho u\cdot\nabla u+\nabla
P -\mu\Delta u-(\mu+\lambda)\nabla\text{div\,}u=0,\\
&P_t+\text{div}(Pu)+P\text{div}u=\frac{\mu}{2}\left|\nabla u+(\nabla u)^{\text{tr}}\right|^2+\lambda(\text{div}u)^2.
	\end{cases}\end{equation}
Here $\mu$~and~$\lambda$~are the coefficients of viscosity, which are assumed to be constants, satisfying the following physical restrictions:
\begin{equation}\begin{aligned}\label{668}
\mu>0,\,\, \mu+\lambda\geq 0.
\end{aligned}\end{equation}
System \eqref{FNSZ}-\eqref{668} is supplemented with the initial data
\begin{equation}\begin{aligned}\label{669}
\rho(x,0)=\rho_0(x),\quad{\rho u(x,0)=m_0(x),} \quad{P(x,0)=P_0,}~~x\in\Omega,
\end{aligned}\end{equation}
and Navier-slip boundary conditions:
\begin{equation}\begin{aligned}\label{666}
&u\cdot{n}=0, \quad\text{and} \quad{\text{curl}u=0} \quad\text {on}\quad{\partial\Omega,}
\end{aligned}\end{equation}
where~$n=(n_1,n_2)$~is the unit outer normal vector to~$\partial\Omega.$~

 Suppose that $0<T^*<\infty$ is the maximal time for the existence of a strong  solution to \eqref{FNSZ}.
 Huang \cite{Huangxinwithoutzero} established a blowup criterion in $\mathbb{R}^3$ or in a bounded domain with the Dirichlet boundary condition for the velocity:
$$
\lim\sup\limits_{t\rightarrow
T^*}\B(\|\rho\|_{L^\infty(0,t;L^\infty(\Omega))}+\|\theta||_{L^\infty(0,t;L^\infty(\Omega))}\B)=\infty,~~~(\mu>4\lambda).
$$
Zhong \cite{[ZXR2],[ZXC]} obtained a blowup criterion in $\mathbb{R}^2$ or in a bounded domain $\Omega$ of $\mathbb{R}^2$ with the Dirichlet boundary condition for the velocity:
\begin{equation}\begin{aligned}\label{3092}
\lim\sup\limits_{t\rightarrow
T^*}\B(\|\rho\|_{L^\infty(0,t;L^\infty(\Omega))}+||P||_{L^\infty(0,t;L^\infty(\Omega))}\B)=\infty.
\end{aligned}\end{equation}

\noindent Wang \cite{[WYF]} proved that
$$
\lim\sup\limits_{t\rightarrow
T^*}\B(\|\rho||_{{L^\infty(0,t;L^{\infty}(\mathbb{R}^2)}}+\|\theta||_{L^2(0,t;L^\infty(\mathbb{R}^2))}\B)=\infty.
$$

After some modifications, the local existence and uniqueness of the strong solution to \eqref{FNSZ}-\eqref{666} with initial data containing vacuum can be established  as in  \cite{[CK]}, which can be stated as follows.
\begin{theorem}\label{LSC}    Let $q\in(2,\infty).$ Assume that
the initial data satisfies
$$
\rho_0\in W^{1,q},\,\, u_0\in D^1\cap D^2,\,\, P_0\in W^{1,q},
$$
and the following compatibility condition:
\begin{equation}\begin{aligned}\label{392}
-\mu\triangle u_0-(\mu+\lambda)\nabla\text{div}u_0+\nabla P=\sqrt{\rho}_0g,
\end{aligned}\end{equation}
for some~$g\in L^2$. Then there exist a positive $T_0$ and a unique strong solution $(\rho, u, P)$ to \eqref{FNSZ}-\eqref{666} such that,
\begin{equation}\label{FNSS}
\begin{cases}
	&\rho\geq0,\rho\in C([0,T_0];W^{1,q}),\,\, \rho_t\in C([0,T_0];L^q), \\
& u\in C([0,T_0];D^1\cap D^2)\cap L^2(0,T_0;D^{2,q}),\\
&\sqrt{\rho}u,\,\,\sqrt{\rho}\dot{u}\in L^{\infty}(0,T_0;L^2),\\
&P\geq 0,\,\, P\in C([0,T_0];W^{1,q}),\,\, P_t\in C([0,T_0];L^q).
	\end{cases}\end{equation}
\end{theorem}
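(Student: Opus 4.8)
The plan is to follow the now-classical linearization-and-iteration scheme of Cho--Kim \cite{[CK]}, pointing out the two modifications that our setting forces: (i) since $\kappa=0$, the pressure $P$ no longer solves a parabolic equation but the transport equation with quadratic source appearing in \eqref{FNSZ}$_3$, so the temperature estimates of \cite{[CK]} are replaced by $W^{1,q}$ transport estimates for $P$; and (ii) the Navier-slip boundary condition \eqref{666} replaces the Dirichlet condition, which means the elliptic estimates for the Lam\'e operator must be taken in the form valid under $u\cdot n=0$, $\operatorname{curl}u=0$ on $\partial\Omega$.

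First I would regularize away the vacuum: replace $\rho_0$ by $\rho_0^\delta=\rho_0+\delta$ for $\delta\in(0,1)$ and, if needed, mollify $u_0,P_0$ so that the compatibility condition \eqref{392} persists with a $\delta$-dependent right-hand side $g^\delta\to g$ in $L^2$. Then set up the iteration: given $u^{k-1}$ in the solution class, solve the linear transport equation $\rho^k_t+\operatorname{div}(\rho^k u^{k-1})=0$ with datum $\rho_0^\delta$; the linear transport equation
\[
P^k_t+\operatorname{div}(P^k u^{k-1})+P^k\operatorname{div}u^{k-1}=\tfrac{\mu}{2}\bigl|\nabla u^{k-1}+(\nabla u^{k-1})^{\mathrm{tr}}\bigr|^2+\lambda(\operatorname{div}u^{k-1})^2
\]
with datum $P_0$; and finally the linear Lam\'e system
\[
\rho^k u^k_t+\rho^k u^{k-1}\!\cdot\!\nabla u^k+\nabla P^k=\mu\Delta u^k+(\mu+\lambda)\nabla\operatorname{div}u^k
\]
with boundary condition \eqref{666} and datum $u_0$. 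Each linear problem is solvable in the required regularity class by standard theory (characteristics for $\rho^k,P^k$; Galerkin approximation plus maximal $L^2_t$- and $L^q_t$-regularity for $u^k$).

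The heart of the argument is the derivation of a priori bounds that are uniform in the iteration index $k$ and in the vacuum parameter $\delta$, on a short interval $[0,T_0]$. For $\rho^k$ and $P^k$ these are $W^{1,q}$ transport estimates driven by $\int_0^t\|\nabla u^{k-1}\|_{L^\infty}\,ds$ (plus, for $P^k$, the source contributing $\int_0^t\|\nabla u^{k-1}\|_{L^\infty}^2\,ds$), where the embedding $W^{1,q}\hookrightarrow L^\infty$ for $q>2$ in dimension two is used on $u^{k-1}$, so the higher estimate for $u^{k-1}$ feeds back into the transport estimates. For $u^k$, I would first test the Lam\'e system against $u^k_t$ and against $u^k$ to obtain the basic bound for $\sqrt{\rho^k}\,u^k_t$ and $\nabla u^k$ in $L^\infty_tL^2$, then differentiate the equation in $t$ and test with $u^k_t$ to bound $\sqrt{\rho^k}\,\dot u^k$ in $L^\infty_tL^2$ and $\nabla u^k_t$ in $L^2_tL^2$; this is where the compatibility condition \eqref{392} enters, controlling the initial value of $\sqrt{\rho^k}\,u^k_t$. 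Rewriting the momentum equation as $-\mu\Delta u^k-(\mu+\lambda)\nabla\operatorname{div}u^k=-\rho^k\dot u^k-\nabla P^k$ and applying $L^2$ and $L^q$ elliptic regularity for the Lam\'e operator under the Navier-slip conditions upgrades this to the $D^1\cap D^2\cap L^2(0,T_0;D^{2,q})$ bound. Choosing $T_0$ small makes all Gronwall factors and time integrals absorbable, so the iterates stay in a fixed ball of the solution space.

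Convergence then follows by showing that the differences $\rho^{k+1}-\rho^k$, $P^{k+1}-P^k$, $u^{k+1}-u^k$ contract in the weaker norms $L^\infty_tL^2$ (for density and pressure) and $L^\infty_tL^2\cap L^2_tH^1$ (for velocity); together with the uniform higher bounds and interpolation this yields a limit $(\rho,u,P)$ solving \eqref{FNSZ}--\eqref{666} with the regularity \eqref{FNSS} for the regularized data, and a further passage $\delta\to0$, legitimate because all bounds are $\delta$-independent, recovers the solution allowing vacuum. Uniqueness is obtained by the same difference-energy estimate applied to two solutions in the class. The step I expect to be the main obstacle is the a priori estimate for $u^k$: keeping the constants in the bounds for $\sqrt{\rho^k}\,\dot u^k$ and $\nabla u^k_t$ independent of $\delta$, and making the elliptic estimate for the Lam\'e system with the Navier-slip boundary condition do exactly what the Dirichlet estimate does in \cite{[CK]} — in particular controlling the boundary terms that arise when integrating by parts against $u^k_t$ in the time-differentiated momentum equation.
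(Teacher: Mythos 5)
The paper does not prove Theorem \ref{LSC} at all; it simply asserts that the local well-posedness ``can be established as in \cite{[CK]}'' after ``some modifications.'' Your proposal correctly identifies those modifications — replacing the parabolic temperature equation of \cite{[CK]} with the transport equation \eqref{FNSZ}$_3$ for $P$ (handled by $W^{1,q}$ transport estimates), and replacing Dirichlet by Navier-slip boundary conditions in the Lam\'e elliptic estimates — and otherwise follows the Cho--Kim regularize-iterate-contract scheme the authors have in mind, so it is essentially the same approach as the paper intends, simply carried out in more detail than the paper bothers to give.
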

Then our main results can be stated as:
\begin{theorem}\label{WAW}Suppose that $(\rho, u, P)$ is the unique strong solution in Theorem \ref{LSC} to \eqref{FNSZ}-\eqref{666}. If the maximal
existence time~$T^{\ast}$~is finite, then there holds
\begin{equation}\begin{aligned}\label{302}
\lim\sup\limits_{t\rightarrow
T^*}\left(\|\rho||_{{L^\infty(0,t;L^{\infty})}}+||P||_{L^{p_0}(0,t;L^\infty)}\right)=\infty,
\end{aligned}\end{equation}
where~$p_0$~is some constant satisfying~$1<p_0\leq2.$~
\end{theorem}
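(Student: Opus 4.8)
The plan is to argue by contradiction. Suppose $T^{\ast}<\infty$ yet
\[
\|\rho\|_{L^\infty(0,T^*;L^\infty)}+\|P\|_{L^{p_0}(0,T^*;L^\infty)}\le M<\infty ,
\]
and aim to bound, on $[0,T^*)$ and in terms of $M,T^*$ and the data only, every norm figuring in the solution class \eqref{FNSS}; Theorem \ref{LSC} applied with initial time near $T^*$ (the state at $T^*$ being then an admissible datum) would extend the solution past $T^*$, contradicting maximality. The main devices are the effective viscous flux $G:=(2\mu+\lambda)\operatorname{div}u-P$ and the vorticity $\omega:=\operatorname{curl}u$: the momentum equation becomes $\rho\dot u=\nabla G-\mu\nabla^{\perp}\omega$ with $\dot u:=u_t+u\cdot\nabla u$, whence $\Delta G=\operatorname{div}(\rho\dot u)$ (with $\partial_n G=\rho\dot u\cdot n$ on $\partial\Omega$) and $\mu\Delta\omega=-\operatorname{curl}(\rho\dot u)$ (with $\omega=0$ on $\partial\Omega$, from the Navier-slip condition \eqref{666}); these give the elliptic gains $\|\nabla G\|_{L^r}+\|\nabla\omega\|_{L^r}\le C\|\rho\dot u\|_{L^r}$ and $\|\nabla u\|_{L^r}\le C(\|G\|_{L^r}+\|\omega\|_{L^r}+\|P\|_{L^r}+\|\nabla u\|_{L^2})$.

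\emph{Step 1 (basic bounds).} Conservation of mass and the energy relation obtained by combining the momentum and pressure equations (the Navier-slip boundary contribution being controlled by lower-order terms, at worst an exponential Gr\"onwall factor over the finite interval) give $\sup_{t<T^*}\int_\Omega(\tfrac12\rho|u|^2+P)\,dx\le C$; since $0\le\rho\le M$ this yields uniform bounds for $\|\rho(t)\|_{L^r}$ ($1\le r\le\infty$), $\|P(t)\|_{L^1}$ and $\|\sqrt\rho\,u(t)\|_{L^2}$. Testing the momentum equation by $u$ and using $\int P\operatorname{div}u\le\varepsilon\|\operatorname{div}u\|_{L^2}^2+C\|P\|_{L^\infty}\|P\|_{L^1}$ then gives, after integration in $t$,
\[
\sup_{0\le t<T^*}\|\sqrt\rho\,u\|_{L^2}^2+\int_0^{T^*}\|\nabla u\|_{L^2}^2\,dt\le C ,
\]
the time integral being finite precisely because $p_0>1$ makes $\|P\|_{L^1(0,T^*;L^\infty)}\le(T^*)^{1-1/p_0}M<\infty$; a weighted Poincar\'e inequality (valid since $\int\rho>0$) also gives $\|u\|_{L^2}\le C(\|\nabla u\|_{L^2}+1)$.

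\emph{Step 2 (first-order estimate).} Testing the momentum equation by $\dot u$, integrating by parts (keeping the Navier-slip curvature boundary terms and using the pressure equation to cancel the worst term arising from the $\nabla P$ contribution), and coupling the outcome with an $L^2$ estimate for $P$ obtained by testing the pressure equation by $P$, I expect a differential inequality of the form
\[
\frac{d}{dt}\Phi(t)+c\int_\Omega\rho|\dot u|^2\,dx\le C\|\nabla u\|_{L^3}^3+C\bigl(\|P\|_{L^\infty}+\|P\|_{L^\infty}^{2}\bigr)\bigl(1+\|\nabla u\|_{L^2}^2\bigr),
\]
with $\Phi$ comparable to $\|\nabla u\|_{L^2}^2+\|P\|_{L^2}^2$. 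The decisive term is the cubic one: using $\|\nabla u\|_{L^3}\le C(\|G\|_{L^3}+\|\omega\|_{L^3}+\|P\|_{L^3}+\|\nabla u\|_{L^2})$, the $2$D Gagliardo--Nirenberg inequality $\|f\|_{L^3}^3\le C\|f\|_{L^2}^2\|\nabla f\|_{L^2}$ on $G,\omega$, the elliptic bound $\|\nabla G\|_{L^2}+\|\nabla\omega\|_{L^2}\le C\sqrt M\,\|\sqrt\rho\,\dot u\|_{L^2}$, and the interpolation $\|P\|_{L^r}\le\|P\|_{L^\infty}^{1-1/r}\|P\|_{L^1}^{1/r}$, one absorbs a fraction of $\|\sqrt\rho\,\dot u\|_{L^2}^2$ and is left with an $L^1_t$-coefficient times $\Phi$ plus an $L^1_t$-forcing built from powers of $\|P\|_{L^\infty}$ not exceeding the second. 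Gr\"onwall's lemma (Step 1 bounding the coefficient) then gives
\[
\sup_{0\le t<T^*}\|\nabla u\|_{L^2}^2+\int_0^{T^*}\!\!\int_\Omega\rho|\dot u|^2\,dx\,dt\le C .
\]
It is exactly in rendering these pressure terms integrable on $(0,T^*)$ that the restriction $p_0\le2$ is invoked.

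\emph{Step 3 (higher-order estimates and conclusion).} Differentiating the momentum equation in $t$, testing by $\dot u$, using the compatibility condition \eqref{392} to initialize at $t=0$, and treating the new pressure terms again through $G,\omega$ and $\|P\|_{L^\infty}\in L^{p_0}_t$, one obtains $\sup_{t<T^*}\|\sqrt\rho\,\dot u\|_{L^2}^2+\int_0^{T^*}\|\nabla\dot u\|_{L^2}^2\,dt\le C$. Elliptic regularity then controls $\sup_t(\|\nabla^2u\|_{L^2}+\|\nabla P\|_{L^2})$; a $2$D logarithmic (Beale--Kato--Majda type) inequality applied through $G$ and $\omega$, with $\|\nabla u\|_{L^\infty}\le C(\|G\|_{L^\infty}+\|\omega\|_{L^\infty}+\|P\|_{L^\infty}+\|\nabla u\|_{L^2})$, gives $\int_0^{T^*}\|\nabla u\|_{L^\infty}\,dt<\infty$ (here $p_0>1$ is used once more); the transport structure of the $\rho$- and $P$-equations then yields $\sup_t(\|\nabla\rho\|_{L^q}+\|\nabla P\|_{L^q})+\|\rho_t\|_{L^q}+\|P_t\|_{L^q}\le C$, and elliptic regularity once more $\int_0^{T^*}\|\nabla^2u\|_{L^q}^2\,dt\le C$. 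All norms in \eqref{FNSS} being uniformly bounded on $[0,T^*)$, Theorem \ref{LSC} extends the solution past $T^*$, a contradiction; hence \eqref{302} holds. The main obstacle is Step 2: taming $\|\nabla u\|_{L^3}^3$ solely via the elliptic gain from $G,\omega$ (without $\|\nabla P\|_{L^2}$, which is unavailable there) while distributing the pressure so that each resulting time integral of a power of $\|P\|_{L^\infty}$ converges --- possible exactly in the regime $1<p_0\le2$ --- the systematic handling of the Navier-slip curvature boundary integrals being a persistent secondary difficulty.
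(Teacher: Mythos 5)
Your overall strategy agrees with the paper: argue by contradiction, exploit the effective viscous flux $F=(2\mu+\lambda)\operatorname{div}u-P$ and the vorticity $w=\operatorname{curl}u$ (both of which enjoy good elliptic structure under the Navier-slip condition), establish successively higher-order a priori bounds, and invoke the local theory to extend past $T^*$. However, there are two genuine gaps, and you misattribute where the restriction $p_0\le 2$ is used.

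\emph{Step 2 is too lossy for $p_0<2$.} You propose a differential inequality whose right-hand side contains $\|P\|_{L^\infty}^2$ (as forcing or coefficient). For Gr\"onwall to close you then need $\|P\|_{L^\infty}^2\in L^1_t$, i.e.\ $p_0\ge2$. Since the theorem asserts the criterion for \emph{some} $p_0\in(1,2]$, and the paper actually obtains $p_0=p_0(q)<2$, your version of Step 2 forfeits that generality and is not obviously compatible with the $p_0$ produced later. The paper avoids the squared factor by bounding the commutator terms $H_1,\dots,H_5$ by $\epsilon\|\sqrt\rho\dot u\|_{L^2}^2+C\|\nabla u\|_{L^2}^4+C\|P\|_{L^3}^3$, then using $\|P\|_{L^3}^3\le\|P\|_{L^\infty}\|P\|_{L^2}^2$ and \emph{coupling} the flux/vorticity evolution with a separate estimate for $\frac{d}{dt}\|P\|_{L^2}^2$. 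The resulting Gr\"onwall coefficient is $\|P\|_{L^\infty}+\|\nabla u\|_{L^2}^2$ --- linear in $\|P\|_{L^\infty}$ --- so that only $\|P\|_{L^\infty}\in L^1_t$ is needed in this step, not $p_0\le2$.

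\emph{Step 3 is circular as written.} You claim the BKM-type bound $\|\nabla u\|_{L^\infty}\le C(\|\operatorname{div}u\|_{L^\infty}+\|\operatorname{curl}u\|_{L^\infty})\log(e+\|\nabla^2u\|_{L^q})+\cdots$ gives $\int_0^{T^*}\|\nabla u\|_{L^\infty}\,dt<\infty$, and \emph{then} the transport equations control $\sup_t\|\nabla\rho\|_{L^q}+\|\nabla P\|_{L^q}$. But $\|\nabla^2u\|_{L^q}\le C(\|\rho\dot u\|_{L^q}+\|\nabla P\|_{L^q})$ by elliptic regularity, so the logarithm already contains the unknown quantity $\|\nabla P\|_{L^q}$; one cannot first integrate $\|\nabla u\|_{L^\infty}$ and afterwards control $\nabla P$. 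The paper substitutes the BKM bound into the transport inequality for $\|\nabla P\|_{L^q}$ (and $\|\nabla\rho\|_{L^q}$), producing a logarithmic Gr\"onwall inequality $f'\le Cg\,f\log f$ with $f=e+\|\nabla\rho\|_{L^q}+\|\nabla P\|_{L^q}$ and $g=1+\|\nabla\dot u\|_{L^2}^2+\|P\|_{L^\infty}^{p_0}$. It is exactly here, in the Young inequality $\|P\|_{L^\infty}\,\|\nabla\dot u\|_{L^2}^{\beta(q)}\le C\|P\|_{L^\infty}^{2/(2-\beta(q))}+\epsilon\|\nabla\dot u\|_{L^2}^2$ with $\beta(q)=q(q-2)/(q^2-2)\in(0,1)$, that the exponent $p_0=2/(2-\beta(q))\in(1,2)$ arises; this is where $p_0\le2$ is used, not in Step 2 as you assert.

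Two smaller points. You test the time-differentiated momentum equation by $\dot u$; the paper instead applies the material-derivative operator $\dot u_j[\partial_t+\operatorname{div}(u\,\cdot)]$ to the momentum equation, which is essential when vacuum is allowed (division by $\rho$ is never performed). Also, the $\sup_t\|\sqrt\rho\dot u\|_{L^2}^2$ estimate must be obtained \emph{coupled} with an evolution estimate for $\sup_t\|P\|_{L^4}^4$ (by multiplying the pressure equation by $4P^3$); the $\|\nabla u\|_{L^4}^4$ and $\|P\|_{L^4}^4$ terms appearing on the right cannot be absorbed otherwise. You do not address this coupling.
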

\begin{remark}
In comparision with \cite{Huangxinwithoutzero} in which a blow-up criterion in terms of the upper bound of the temperature was obtained under the condition~$\mu>4\lambda,$~ there is no need to impose the technical conditions relating the values of the  viscosity coefficients ~$\mu, \lambda$~as well as we establish a blow-up criterion in terms of the integrability of the pressure. This result gives an answer of Nash's conjecture proposed in \cite{[Nash]} without thermal conductivity, which says:
\par {\itshape  "Probably one should first try to prove a conditional existence and uniqueness theorem for flow equations. This should give existence, smoothness, and continuation (in time) of flows, conditional on the non-appearance of
certain gross type of singularity, such as infinities of temperature or density."}
\end{remark}
\begin{remark}Due to the fact that~$||P||_{L^{p_0}(0,t;L^\infty)}\leq||P||_{L^{\infty}(0,t;L^\infty)},$ we extend Zhong's result in \cite{[ZXR2],[ZXC]} under Navier-slip conditions.
\end{remark}

To prove our main result, we apply for the effective viscous flux $F$, and the vorticity $w$ are defined as follows:
\begin{equation}\begin{aligned}\label{11.1}
F\triangleq(2\mu+\lambda)\text{div}u-P,\quad{w\triangleq \text{curl} u=\nabla^{\perp}\cdot{u}=\partial_{x_2}u_1-\partial_{x_1}u_2}.
\end{aligned}\end{equation}

The main result is proved by a contradiction argument. Suppose that \eqref{302} was false. Then we first obtain the uniform estimate of $\|\nabla u||_{{L^2(0,T; L^2)}}.$ The second step is to estimate $\|\nabla u||_{{L^\infty(0,T; L^2)}}.$ In
 order to get the $L^{\infty}_T L^2_x$ norm of $\sqrt{\rho}\dot{u}$ (in Lemma \ref{BBA}), we need to deal with the term $\|\nabla u||_{L^\infty L^4},$ where $\dot{u}=u_t+u\cdot{\nabla
u}$~represents the material derivative. And we also need the estimate of $||P||_{L^\infty L^4}.$ The effective viscous flux $F$ and the vorticity $w$ solve the Neumann problem and
 the related Dirichlet problem respectively. We use the standard $L^p$ theory to estimate $\|\nabla F||_{L^\infty L^p}$ and $\|\nabla w||_{L^\infty L^p}$ which plays a
 crucial role in estimating $\|\nabla u||_{L^\infty L^p}(p=3, p=4).$ The main difficulty in carrying out this construction is that integrals on the boundary
 $\partial\Omega,$
which are $\int_{\partial\Omega}F_t(\dot{u}\cdot{n})ds$ and $\int_{\partial\Omega}u\cdot{\nabla F}(\dot{u}\cdot{n})ds,$ appear and are required to be dealt with. To overcome this difficulty, as in Cai-Li \cite{[JM]}, together with the slip boundary condition~$u\cdot{n}|_{\partial \Omega}$~, we have
\begin{equation}\begin{aligned}\label{13.1}
u=(u\cdot{n^{\perp}})n^{\perp},(u\cdot{\nabla})u\cdot{n}=-(u\cdot{\nabla})n\cdot{u},
\end{aligned}\end{equation}
where $n^{\perp}$ is the unit tangential vector on the boundary $\partial \Omega$ denoted by
\begin{equation}\begin{aligned}\nonumber
n^{\perp}\triangleq(n_2,-n_1).
\end{aligned}\end{equation}
Furthermore, using Green formula yields that for~$f\in H^1,$~
\begin{equation}\begin{aligned}\label{13.3}
\int_{\partial \Omega}u\cdot{\nabla f}ds=&\int_{\partial \Omega}(u\cdot{n^{\perp}})n^{\perp}\cdot{\nabla f}ds\\
=&\int_{\Omega}\nabla f\cdot{\nabla^{\perp}}(u\cdot{n^{\perp}})dx\leq C||f||_{H^1}||u||_{H^1},
\end{aligned}\end{equation}
where $\nabla^{\perp}f=(\partial_2,-\partial_1)f.$

Making full use of \eqref{13.1} and \eqref{13.3}, we can rewrite
\begin{equation}\begin{aligned}
&\int_{\partial\Omega}F((u\cdot{\nabla u})\cdot{\nabla n}\cdot{u})ds\\
&=\int_{\partial\Omega}F(u\cdot{n^{\perp}})n^{\perp}\cdot{\nabla u_i\partial_i n_ju_j}ds\\
&=\int_{\Omega}\nabla^{\perp}\cdot{\left(\nabla u_{i}\partial_{i}n_{j}u_{j}F(u\cdot{n^{\perp}})\right)}dx\\
&=\int_{\Omega}\nabla u_{i}\cdot{\nabla^{\perp}\left(\partial_{i}n_{j}u_{j}F(u\cdot{n^{\perp}})\right)}dx,\\
\end{aligned}\end{equation}
and
\begin{equation}\begin{aligned}
\int_{\partial\Omega}(u\cdot{\nabla F})(\dot{u}\cdot{n})ds=&\int_{\partial\Omega}(u\cdot{n^{\perp}})n^{\perp}\cdot{\nabla F(\dot{u}\cdot{n})}ds\\
=&\int_{\Omega}\nabla^{\perp}\cdot(({u\cdot{n^{\perp}})\nabla F(\dot{u}\cdot{n})})dx\\
=&\int_{\Omega}\nabla F\cdot{\nabla^{\perp}((u\cdot{n^{\perp}})(\dot{u}\cdot{n}))}dx.\\
\end{aligned}\end{equation}

According to local well-posedness in Theorem \ref {LSC}, we need to estimate $\|\nabla\rho||_{{L^\infty}(0,T;L^p)}$ and $\|\nabla P||_{{L^\infty}(0,T;L^p)}$ by solving a logarithmic Gronwall inequality, where~$p>2.$~
$\|\nabla\rho||_{{L^\infty}(0,T;L^p)}$ can be determined by $\|\nabla u||_{L^1(0,T;L^{\infty})}$ due to the scalar hyperbolic structure of
$\eqref{FNS}_{1}.$
For $\|\nabla P||_{L^\infty L^p}$, we have
\begin{equation}\begin{aligned}\label{190}
&\frac{d}{dt}\|\nabla P||_{L^p}\leq C(||P||_{L^\infty}+\|\nabla u||_{L^\infty})(\|\nabla P||_{L^p}+\|\nabla^2 u||_{L^p}).
\end{aligned}\end{equation}
Thus, in order to control the norm of $\|\nabla P||_{{L^\infty}(0,T;L^p)}$, it suffices to get the upper bound of $\|\nabla
u||_{L^\infty}\|\nabla^2 u||_{L^p}.$
Indeed, we will obtain
\begin{equation}\begin{aligned}\label{11.17}
\|\nabla^2 u||_{L^p}\leq&C(\|\nabla\dot{u}||^{\frac{p(p-2)}{p^2-2}}_{L^2}+\|\nabla P||_{L^p}),
\end{aligned}\end{equation}
and
\begin{equation}\begin{aligned}
\|\nabla u||_{L^\infty}\leq&C\left(1+\|\nabla\dot{u}||^{\frac{p^2(p-2)}{2(p-1)(p^2-2)}}_{L^2}+||P||_{L^\infty}\right)\log(e+\|\nabla^2
u||_{L^p})+C\|\nabla u||_{L^2}+C.\\
\end{aligned}\end{equation}
where $\frac{p(p-2)}{p^2-2},\frac{p^2(p-2)}{2(p-1)(p^2-2)}\in(0,1).$
\par
Then it yields
\begin{equation}\begin{aligned}\label{12.350}
&\|\nabla u||_{L^\infty}\|\nabla^2 u||_{L^p}\\
\leq&C\|\nabla\dot{u}||^{\frac{p(p-2)}{p^2-2}}_{L^2}\left(1+\|\nabla\dot{u}||^{\frac{p^2(p-2)}{2(p-1)(p^2-2)}}_{L^2}+||P||_{L^\infty}\right)\log(e+\|\nabla^2
u||_{L^p})+\quad\text{other}\quad\text{terms}\\
\leq& C\|\nabla\dot{u}||^2_{L^2}+\quad\text{other}\quad\text{terms}.
\end{aligned}\end{equation}
\par
It should be remarked  that in the process of our proof, we just used $||P||_{L^1(0,T;L^{\infty})}$ during estimating the low regularity of the velocity. When we get the improved regularity of the density and pressure, we need $||P||_{L^{p_0}(0,T;L^{\infty})}$,where~$p_0$~is some constant satisfying~$1<p_0\leq2.$~
Putting \eqref{12.350} into \eqref{190} leads to
\begin{equation}\begin{aligned}
&\frac{d}{dt}\|\nabla P||_{L^p}\\
\leq&C\left(1+\|\nabla\dot{u}||^{\frac{p^2(p-2)}{2(p-1)(p^2-2)}}_{L^2}+||P||_{L^\infty}\right)\log\left(e+\|\nabla\dot{u}||^{\frac{p(p-2)}{p^2-2}}_{L^2}+\|\nabla
P||_{L^p}\right)\left(\|\nabla\dot{u}||^{\frac{p(p-2)}{p^2-2}}_{L^2}+\|\nabla P||_{L^p}+1\right).
\end{aligned}\end{equation}
Together with Young inequality we have,
\begin{equation}\begin{aligned}
&\frac{d}{dt}\|\nabla P||_{L^p}\leq C||P||^{p_0}_{L^\infty}+\|\nabla\dot{u}||^2_{L^2}+\quad\text{other}\quad\text{terms},
\end{aligned}\end{equation}
where~$p_0$~is some constant satisfying~$1<p_0\leq2$,~which leads to the estimates of~$\|\nabla P||_{L^\infty}(0,T;L^p).$~
\par
Some notations are introduced. For $1\leq p\leq\infty$, $L^{p}(\Omega)$ represents the usual
 Lebesgue  spaces. For a nonnegative integer $k$ the classical Sobolev space  $W^{k,p}(\Omega)$
is equipped with the norm $\|f\|_{W^{k,p}(\Omega)}=\sum\limits_{|\alpha| =0}^{k}\|D^{\alpha}f\|_{L^{p}(\Omega)},$ with $\alpha=(\alpha_1,\alpha_2).$
A function $f$ belongs to  the homogeneous Sobolev spaces $D^{k,l}$
if $
u\in L^1_{\rm{loc}}(\Omega)$ and $\|\nabla^k u \|_{L^l}<\infty.$ For simplicity,   we write
$$L^p=L^p(\Omega),   \ H^k=W^{k,2}(\Omega), \ D^k=D^{k,2}(\Omega).$$

  The paper is organized as follows. In section 2, we will derive the regularity of the velocity and pressure. In section 3, we will prove the main result
  Theorem \ref{WAW}.

\section{Regularity of the velocity and pressure}
We will prove our main result by contradiction arguments.
Suppose that the assertion of the Theorem \ref{WAW} was false, namely
\begin{equation}\begin{aligned}\label{12.1}
\lim\sup\limits_{t\rightarrow
T^*}\left(\|\rho||_{{L^\infty(0,t;L^{\infty})}}+||P||_{L^{p_0}(0,t;L^\infty)}\right)\leq M,
\end{aligned}\end{equation}
where~$p_0$~is some constant satisfying~$1<p_0\leq2,$~and~$M$~is a finite number.
\par
The div-curl control in \cite{{[HG]},{[SY]},{[AG]}} will be used to get the estimate of~$\|\nabla u||_{L^p}$.~
\begin{lemma}\label{LAJ}Let ~$1<p<\infty$~and ~$\Omega$~be a bounded domain in ~$\mathbb{R}^2$~with Lipschitz boundary ~$\partial\Omega.$~For~$u\in
W^{1,q},$~if~$\Omega$~is simply connected and~$u\cdot{n}=0$~on~$\partial\Omega,$~then it holds that
\begin{equation}\begin{aligned}\label{11.3}
\|\nabla u||_{L^p}\leq C\left(\|\text{div}~u||_{L^p}+\|\text{curl}~u||_{L^p}\right).
\end{aligned}\end{equation}
\end{lemma}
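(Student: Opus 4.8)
The plan is to reduce \eqref{11.3} to the classical $L^p$ Calder\'on--Zygmund theory for the Laplacian by means of a Hodge-type decomposition of $u$. Since $\Omega$ is a bounded, simply connected planar domain, its boundary is a single Jordan curve, which is exactly what makes the scalar potentials below globally well defined. First I would solve the Neumann problem
\[
\Delta\phi=\text{div}\,u\quad\text{in }\Omega,\qquad \frac{\partial\phi}{\partial n}=0\quad\text{on }\partial\Omega,
\]
which is solvable because the compatibility condition $\int_\Omega\text{div}\,u\,dx=\int_{\partial\Omega}u\cdot n\,ds=0$ is guaranteed by the hypothesis $u\cdot n=0$; I normalize $\phi$ by $\int_\Omega\phi\,dx=0$. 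Setting $v\triangleq u-\nabla\phi$, one then has $\text{div}\,v=0$ in $\Omega$, $v\cdot n=0$ on $\partial\Omega$, and $\text{curl}\,v=\text{curl}\,u$.

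Next, because $v$ is divergence-free on the simply connected domain $\Omega$, there is a stream function $\psi$ with $v=\nabla^{\perp}\psi$; the condition $v\cdot n=0$ becomes $\partial_\tau\psi=0$ along $\partial\Omega$, so $\psi$ is constant on the connected boundary and may be taken to vanish there. Hence $\psi$ solves the Dirichlet problem $\Delta\psi=\text{curl}\,v=\text{curl}\,u$ in $\Omega$, $\psi=0$ on $\partial\Omega$. I would then invoke the $L^p$ regularity estimates for the Neumann and Dirichlet Laplacian,
\[
\|\nabla^2\phi\|_{L^p}\le C\,\|\text{div}\,u\|_{L^p},\qquad \|\nabla^2\psi\|_{L^p}\le C\,\|\text{curl}\,u\|_{L^p}.
\]
Since $u=\nabla\phi+\nabla^{\perp}\psi$ gives $\nabla u=\nabla^2\phi+\nabla\nabla^{\perp}\psi$ and the entries of $\nabla\nabla^{\perp}\psi$ are, up to sign, those of $\nabla^2\psi$, it follows that
\[
\|\nabla u\|_{L^p}\le\|\nabla^2\phi\|_{L^p}+\|\nabla^2\psi\|_{L^p}\le C\big(\|\text{div}\,u\|_{L^p}+\|\text{curl}\,u\|_{L^p}\big),
\]
which is precisely \eqref{11.3}.

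The delicate point is the $L^p$ solvability and regularity step on a merely Lipschitz domain: the full-range ($1<p<\infty$) Calder\'on--Zygmund estimate for the Dirichlet and Neumann Laplacian is standard on $C^{1,\alpha}$ (in particular smooth) domains, but on a general Lipschitz domain it only holds for $p$ in an interval around $2$ determined by the Lipschitz character; for the application in this paper $\partial\Omega$ is smooth, so this causes no loss, and in the stated generality one may instead quote the div--curl inequalities of \cite{[HG],[SY],[AG]} directly. A secondary, routine point is to justify the stream-function representation $v=\nabla^{\perp}\psi$ and the whole reduction at $W^{1,p}$ regularity by a density argument together with trace theory on Lipschitz domains.
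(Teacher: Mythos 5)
The paper does not prove this lemma at all: it simply invokes it, citing Aramaki, Mitrea, and von Wahl \cite{[HG],[SY],[AG]} as the sources, so there is no in-paper argument to compare against. Your proposal supplies an actual proof via the Hodge-type splitting $u=\nabla\phi+\nabla^{\perp}\psi$, and the argument is correct: the boundary condition $u\cdot n=0$ gives the compatibility for the Neumann problem defining $\phi$, the residual field $v=u-\nabla\phi$ is divergence-free with $v\cdot n=0$, simple connectivity of $\Omega$ (so that $\partial\Omega$ is a single curve) yields a globally defined stream function $\psi$ vanishing on the boundary, and with the paper's convention $\text{curl}\,u=\partial_2 u_1-\partial_1 u_2$ one indeed gets $\Delta\psi=\text{curl}\,u$. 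Applying the Dirichlet and Neumann $L^p$ elliptic estimates then gives \eqref{11.3}. This is a clean, self-contained argument that is more informative than the paper's bare citation.

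Your closing caveat is the right one and worth underlining: the lemma as stated allows a merely Lipschitz $\partial\Omega$ and the full range $1<p<\infty$, but the textbook Calder\'on--Zygmund theory used in the last step only gives the full $p$-range on $C^{1,\alpha}$ (or smoother) domains; on general Lipschitz domains the Neumann and Dirichlet $L^p$ regularity is confined to a window of exponents around $p=2$. So in the stated generality the correct move is exactly what the paper does — cite Mitrea's integral-equation results for planar Lipschitz div--curl systems — whereas your Hodge argument cleanly proves the version actually used in this paper, where $\Omega$ is smooth. One further point you flag only in passing but should be made explicit if this were to be written out: for $u\in W^{1,p}$ the identity $v=\nabla^{\perp}\psi$ and the reduction to the two scalar problems must be justified at low regularity (e.g.\ by mollification/density and the standard trace theory), rather than by the smooth-field computation alone.
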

The following is the standard energy estimate.
\begin{lemma}\label{QQW} Suppose that \eqref{12.1} is valid. Then there holds
\begin{equation}\begin{aligned}\label{10.3}
\sup_{0\leq t\leq T}\int_{\Omega}\left(\rho |u|^2+P\right)dx+\int_{0}^{T}\|\nabla u||^2_{L^2}dt\leq C,
\end{aligned}\end{equation}
where  C denotes generic positive constant depending only on $\Omega,$ $M,$ $\lambda,$ $\mu$, $T^*$
and the initial data for any ~$T\in[0,T^{\ast}).$~
\end{lemma}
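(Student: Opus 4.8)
The plan is to run the standard total-energy estimate for $\tfrac12\rho|u|^2+P$, routing the viscous term through the effective viscous flux $F$ and the vorticity $w$ of \eqref{11.1} so that the two Navier-slip conditions kill every boundary integral that appears. First I would multiply $\eqref{FNSZ}_{2}$ by $u$ and integrate over $\Omega$; using $\eqref{FNSZ}_{1}$ together with $u\cdot n=0$ on $\partial\Omega$, the inertial part equals $\tfrac12\tfrac{d}{dt}\int_\Omega\rho|u|^2\,dx$. For the elliptic part I would use the two-dimensional identity $\mu\Delta u+(\mu+\lambda)\nabla\,\mathrm{div}\,u-\nabla P=\nabla F+\mu\nabla^{\perp}w$ and integrate by parts: the boundary term $\int_{\partial\Omega}F(u\cdot n)\,ds$ produced by the $\nabla F$ piece vanishes because $u\cdot n=0$, and the boundary term $\int_{\partial\Omega}w\,(u\cdot n^{\perp})\,ds$ produced by the $\nabla^{\perp}w$ piece vanishes because $w=\mathrm{curl}\,u=0$ on $\partial\Omega$. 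Since $F=(2\mu+\lambda)\mathrm{div}\,u-P$, this produces the boundary-free identity
\[
\tfrac12\tfrac{d}{dt}\int_\Omega\rho|u|^2\,dx+(2\mu+\lambda)\|\mathrm{div}\,u\|_{L^2}^2+\mu\|w\|_{L^2}^2=\int_\Omega P\,\mathrm{div}\,u\,dx.
\]

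Because $\mu>0$ and $2\mu+\lambda\ge\mu>0$ by \eqref{668}, Lemma \ref{LAJ} lets me bound the left-hand dissipation from below by $c_0\|\nabla u\|_{L^2}^2$ for some $c_0=c_0(\mu,\lambda,\Omega)>0$. Estimating the pressure work by Cauchy--Schwarz and Young as $\int_\Omega P\,\mathrm{div}\,u\le\tfrac{c_0}{2}\|\nabla u\|_{L^2}^2+C\|P\|_{L^2}^2$ and then $\|P\|_{L^2}^2\le\|P\|_{L^\infty}\|P\|_{L^1}$, I obtain
\[
\tfrac{d}{dt}\int_\Omega\rho|u|^2\,dx+c_0\|\nabla u\|_{L^2}^2\le C\|P\|_{L^\infty}\|P\|_{L^1}.
\]
In parallel I would integrate $\eqref{FNSZ}_{3}$ over $\Omega$; the convective term $\int_\Omega\mathrm{div}(Pu)\,dx$ vanishes since $u\cdot n=0$, the right-hand side obeys $\tfrac{\mu}{2}|\nabla u+(\nabla u)^{\mathrm{tr}}|^2+\lambda(\mathrm{div}\,u)^2\le C|\nabla u|^2$, and $\int_\Omega P\,\mathrm{div}\,u$ is handled exactly as before, giving
\[
\tfrac{d}{dt}\|P\|_{L^1}\le C\|\nabla u\|_{L^2}^2+C\|P\|_{L^\infty}\|P\|_{L^1}.
\]

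To close the estimate I would set $y(t)=\int_\Omega\rho|u|^2\,dx+\beta\|P(t)\|_{L^1}$ with $\beta>0$ chosen small (in terms of $c_0$ and $C$) so that multiplying the last inequality by $\beta$ and adding it to the previous one absorbs the two $\|\nabla u\|_{L^2}^2$ contributions; since $\|P\|_{L^1}\le\beta^{-1}y$ and $P\ge0$, this leaves $y'(t)\le C\|P(t)\|_{L^\infty}\,y(t)$. Here is the only place the pressure hypothesis enters: since $1<p_0$ and $T^{*}<\infty$, Hölder's inequality in time gives, for every $T<T^{*}$, $\int_0^{T}\|P(t)\|_{L^\infty}\,dt\le (T^{*})^{1-1/p_0}\|P\|_{L^{p_0}(0,T;L^\infty)}\le (T^{*})^{1-1/p_0}M<\infty$, so Gronwall yields $y(t)\le y(0)\,e^{C(T^{*})^{1-1/p_0}M}\le C$ on $[0,T^{*})$. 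In particular $\sup_{0\le t\le T}\int_\Omega(\rho|u|^2+P)\,dx\le C$; feeding this bound back into the first differential inequality and integrating in time then controls $\int_0^{T}\|\nabla u\|_{L^2}^2\,dt$, which is \eqref{10.3}.

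The step I expect to be the only delicate one is the handling of $\partial\Omega$: a direct integration by parts on $-\mu\int_\Omega\Delta u\cdot u\,dx$ leaves a boundary integral involving $(\partial_n u)\cdot u$ (equivalently, a curvature-weighted $\int_{\partial\Omega}|u|^2\,ds$) that is not obviously absorbable, since the identity above carries no spare dissipation; rewriting the viscous term via $F$ and $w$ is precisely what lets both Navier-slip conditions $u\cdot n=0$ and $\mathrm{curl}\,u=0$ act and removes all boundary terms. Everything else — the div--curl bound of Lemma \ref{LAJ}, the Young-inequality splittings, and the coupled Gronwall argument that settles the velocity--pressure feedback using only $\|P\|_{L^\infty}\in L^{1}(0,T^{*})$ — is routine, and it is exactly that feedback which forces the exponent $p_0$ in \eqref{302} to satisfy $p_0>1$.
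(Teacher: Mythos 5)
Your proof is correct and follows essentially the same path as the paper's: rewrite the viscous term via $\Delta u=\nabla\mathrm{div}\,u+\nabla^{\perp}w$ so that both Navier-slip conditions kill the boundary integrals, obtain the identity $\frac12\frac{d}{dt}\int_\Omega\rho|u|^2\,dx+(2\mu+\lambda)\|\mathrm{div}\,u\|_{L^2}^2+\mu\|w\|_{L^2}^2=\int_\Omega P\,\mathrm{div}\,u\,dx$, combine it linearly with the integrated pressure equation, invoke Lemma~\ref{LAJ} for the coercivity, and close by Gronwall using $\|P\|_{L^\infty}\in L^1(0,T^*)$ (which follows from $P\in L^{p_0}(0,T^*;L^\infty)$ with $p_0>1$ and $T^*<\infty$, exactly as you note). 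The only cosmetic difference is that you multiply the pressure inequality by a small $\beta$ whereas the paper multiplies the velocity inequality by a large constant $C_0+1$; these are equivalent. One tiny omission: you cite $P\ge 0$ without comment; the paper re-derives it along particle paths, but it is also available directly from Theorem~\ref{LSC}, so your use of it is legitimate.
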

\begin{proof}
The pressure~$P$ satisfies
\begin{equation}\begin{aligned}\label{18.2}
P_t+u\cdot{\nabla P}+2P\text{div}u=G\triangleq\frac{\mu}{2}\left|\nabla u+(\nabla u)^{\text{tr}}\right|^2+\lambda(\text{div}u)^2\geq0.
\end{aligned}\end{equation}
Define the particle path before blow-up time
\begin{equation}
\begin{cases}
	\frac{d}{dt}X(x,t)=u(X(x,t),t), \\
X(x,0)=x.
	\end{cases}\end{equation}
Along with the particle path, we obtain from~\eqref{18.2}~that
\begin{equation}\begin{aligned}\nonumber
\frac{d}{dt}P(X(x,t),t)=-2P\text{div}u+G,
\end{aligned}\end{equation}
which implies
\begin{equation}\begin{aligned}\nonumber
P(X(x,t),t)=\exp\left(-2\int_{0}^{t}\text{div}uds\right)\left[P_0+\int_{0}^{t}\exp\left(2\int_{0}^{s}\text{div}u d\tau\right)Gds\right]\geq 0.
\end{aligned}\end{equation}
Integrating \eqref{18.2} over~$\Omega$~and using slip boundary condition~\eqref{666},~we arrive at
\begin{equation}\begin{aligned}\label{1234}
\frac{d}{dt}\int_{\Omega}Pdx=&-\int_{\Omega}P\text{div}udx+\int_{\Omega}\left(\frac{\mu}{2}\left|\nabla u+(\nabla
u)^{\text{tr}}\right|^2+\lambda(\text{div}u)^2\right)dx\\
\leq&C||P||^2_{L^2}+C_0\|\nabla u||^2_{L^2}\\
\leq&C||P||_{L^\infty}||P||_{L^1}+C_0\|\nabla u||^2_{L^2},
\end{aligned}\end{equation}
where~$C_0$~is contant depending on~$\lambda, \mu.$~

In view of $\Delta u=\nabla\text{div}u+\nabla^{\perp}w,$ we rewrite the equation of conservation of momentum $\eqref{FNSZ}_{2}$ as
\begin{equation}\begin{aligned}\label{10.2}
\rho\dot{u}+\nabla P=(2\mu+\lambda)\nabla\text{div}u+\mu\nabla^{\perp}w.
\end{aligned}\end{equation}
Multiplying \eqref{10.2} by $u$ and integrating over $\Omega$, together with the boundary conition \eqref{666}, we see that
\begin{equation}\begin{aligned}\label{112}
\frac{1}{2}\frac{d}{dt}\int_{\Omega}\rho|u|^2dx+(2\mu+\lambda)\int_{\Omega}(\text{div}u)^2dx+\mu \int_{\Omega}w^2dx=&\int_{\Omega}P\text{div}udx\\
\leq& C||P||_{L^\infty}||P||_{L^1}+\frac{\epsilon}{4}\|\nabla u||^2_{L^2},
\end{aligned}\end{equation}
together with \eqref{11.3}, we obtain after choosing $\epsilon$ suitably small that
\begin{equation}\begin{aligned}\label{002}
\frac{d}{dt}\int_{\Omega}\rho|u|^2dx+\int_{\Omega}|\nabla u|^2dx\leq& C||P||_{L^\infty}||P||_{L^1}.
\end{aligned}\end{equation}
Multiplying \eqref{002} by $(C_0+1)$~and then adding to \eqref{1234}, one gets that
\begin{equation}\begin{aligned}\label{113}
\frac{d}{dt}\int_{\Omega}\left(P+(C_0+1)\rho|u|^2\right)dx+\int_{\Omega}|\nabla u|^2dx\leq C||P||_{L^\infty}||P||_{L^1}.
\end{aligned}\end{equation}
Consequently, Gronwall's inequality together with \eqref{12.1} yields \eqref{10.3}.
\end{proof}
We derive the following estimate on the $L^\infty(0,t;L^2)-$ norm of $\nabla u,$ which is  crucial to obtain the estimate of $\sup_{0\leq t\leq
T}\|\sqrt{\rho}\dot{u}||_{L^2}.$
\begin{lemma}\label{ZZY} Suppose that \eqref{12.1} is valid. Then there holds
\begin{equation}\begin{aligned}\label{12.2}
\sup_{0\leq t\leq T}(\|\nabla u||^2_{L^2}+||P||^2_{L^2})+\int_{0}^{T}\int_{\Omega}\rho|\dot{u}|^2dxdt\leq C,
\end{aligned}\end{equation}
for any ~$T\in[0,T^{\ast})$.~
\end{lemma}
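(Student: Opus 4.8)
\textbf{Proof proposal for Lemma \ref{ZZY}.}

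The plan is to obtain the energy estimate at the level of one spatial derivative of $u$ by multiplying the momentum equation \eqref{10.2} by the material derivative $\dot u=u_t+u\cdot\nabla u$ and integrating over $\Omega$. The left-hand side produces $\int_\Omega\rho|\dot u|^2\,dx$, which is the good term we want to control, plus terms of the form $\int_\Omega\bigl[(2\mu+\lambda)\nabla\operatorname{div}u+\mu\nabla^\perp w\bigr]\cdot\dot u\,dx$; the pressure contributes $-\int_\Omega\nabla P\cdot\dot u\,dx=\int_\Omega P\operatorname{div}\dot u\,dx$ plus a boundary term. First I would integrate by parts in each of these terms, carefully using the Navier-slip boundary conditions $u\cdot n=0$ and $\operatorname{curl}u=0$ on $\partial\Omega$; the standard computation (as in Cai--Li \cite{[JM]}) turns the viscous terms into $-\tfrac{d}{dt}$ of $\tfrac{2\mu+\lambda}{2}\|\operatorname{div}u\|_{L^2}^2+\tfrac{\mu}{2}\|w\|_{L^2}^2$ up to cubic error terms like $\int_\Omega\nabla u\cdot\nabla u\cdot\nabla u$ and the boundary integrals involving $\nabla n$ that the introduction already flagged. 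Those boundary integrals are handled exactly by the identities \eqref{13.1} and \eqref{13.3}: one rewrites $\int_{\partial\Omega}$ as $\int_\Omega\nabla^\perp\cdot(\cdots)$ and bounds it by $C\|\cdot\|_{H^1}\|u\|_{H^1}$, absorbing a small multiple of $\|\nabla\dot u\|_{L^2}$ or $\int\rho|\dot u|^2$ where needed and using $\|\nabla u\|_{L^2}\le C$ from Lemma \ref{QQW}. Meanwhile, for the pressure one needs the evolution of $\|P\|_{L^2}^2$: multiply \eqref{18.2} by $P$ and integrate to get $\tfrac{d}{dt}\|P\|_{L^2}^2\le C\|\operatorname{div}u\|_{L^2}\|P\|_{L^\infty}\|P\|_{L^2}+C\|P\|_{L^\infty}\|\nabla u\|_{L^2}^2\le \varepsilon\|\nabla u\|_{L^2}^2+C(1+\|P\|_{L^\infty}^2)\|P\|_{L^2}^2+\cdots$, and the term $\int_\Omega P\operatorname{div}\dot u\,dx$ from the main identity is integrated by parts once more and paired against $\tfrac{d}{dt}\|P\|_{L^2}^2$ after using the $P$-equation to replace $P_t$.

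After assembling these pieces I would arrive at a differential inequality of the schematic form
\begin{equation}\begin{aligned}\nonumber
\frac{d}{dt}B(t)+\int_\Omega\rho|\dot u|^2\,dx\le C\bigl(1+\|P\|_{L^\infty}\bigr)B(t)+C\|\nabla u\|_{L^3}^3+C,
\end{aligned}\end{equation}
where $B(t)=(2\mu+\lambda)\|\operatorname{div}u\|_{L^2}^2+\mu\|w\|_{L^2}^2+\|P\|_{L^2}^2$ is comparable to $\|\nabla u\|_{L^2}^2+\|P\|_{L^2}^2$ by Lemma \ref{LAJ}. The cubic term $\|\nabla u\|_{L^3}^3$ is the one genuinely two-dimensional difficulty: I would control it by the Gagliardo--Nirenberg/Ladyzhenskaya-type bound together with the div-curl estimate, writing $\|\nabla u\|_{L^3}^3\le C\|\nabla u\|_{L^2}^2\|\nabla u\|_{L^6}\le C\|\nabla u\|_{L^2}^2\bigl(\|F\|_{L^6}+\|w\|_{L^6}+\|P\|_{L^6}\bigr)$ and then using elliptic estimates for the Neumann problem for $F$ and Dirichlet problem for $w$ (recalling $\Delta F=\operatorname{div}(\rho\dot u)$, $\mu\Delta w=\nabla^\perp\cdot(\rho\dot u)$) to get $\|F\|_{L^6}+\|w\|_{L^6}\le C\|\rho\dot u\|_{L^{3/2}}+\cdots\le C\|\sqrt\rho\|_{L^6}\|\sqrt\rho\dot u\|_{L^2}^{?}\cdots$; more simply, one uses $\|\nabla u\|_{L^4}^2\le C\|\nabla u\|_{L^2}\|\nabla^2 u\|_{L^2}$ and the $H^2$-type bound $\|\nabla^2u\|_{L^2}\le C(\|\rho\dot u\|_{L^2}+\|\nabla P\|_{L^2})$, but since $\|\nabla P\|_{L^2}$ is not yet available at this stage one instead pairs against the $L^6$ interpolation so that only $\|P\|_{L^\infty}$ (hence $\|P\|_{L^6}\le C\|P\|_{L^\infty}$ using the fixed volume of $\Omega$) and $\|\sqrt\rho\dot u\|_{L^2}$ enter. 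The upshot is that the cubic term is absorbed into $\tfrac12\int_\Omega\rho|\dot u|^2\,dx$ plus $C(1+\|P\|_{L^\infty}^2)(1+B(t))$.

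The final step is Gronwall: the hypothesis \eqref{12.1} gives $\|P\|_{L^{p_0}(0,T;L^\infty)}\le M$, and since $p_0>1$ in particular $\|P\|_{L^1(0,T;L^\infty)}\le C_T M<\infty$, so $\int_0^T(1+\|P\|_{L^\infty})\,dt\le C$; applying Gronwall's inequality to $\tfrac{d}{dt}B\le C(1+\|P\|_{L^\infty}^{p_0})B+C$ (note $\|P\|_{L^\infty}\le 1+\|P\|_{L^\infty}^{p_0}$ since $p_0>1$, or simply use $p_0\le 2$ and Young) and then integrating the inequality once more over $[0,T]$ yields $\sup_{[0,T]}B(t)+\int_0^T\!\!\int_\Omega\rho|\dot u|^2\,dx\,dt\le C$, which is \eqref{12.2} after invoking Lemma \ref{LAJ} once more to pass from $B$ to $\|\nabla u\|_{L^2}^2$. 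I expect the main obstacle to be the bookkeeping of the boundary terms from integrating the viscous part of \eqref{10.2} against $u\cdot\nabla u$ — ensuring via \eqref{13.1}--\eqref{13.3} that every such term is $H^1\times H^1$-controllable with a constant depending only on the geometry of $\partial\Omega$ — together with arranging the interpolation of $\|\nabla u\|_{L^3}^3$ so that $\|\nabla P\|_{L^2}$, which is only estimated later, never appears.
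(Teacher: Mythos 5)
Your proposal follows essentially the same route as the paper: multiply the momentum equation by $\dot{u}$, integrate over $\Omega$, integrate by parts using the slip boundary conditions, handle the resulting boundary integrals via \eqref{13.1}--\eqref{13.3}, couple with a separate estimate on $\frac{d}{dt}\|P\|_{L^2}^2$ from the pressure equation, and close with Gronwall. The only organizational difference is that the paper keeps $\nabla P$ inside $\nabla F$ and works with $\frac{1}{2\mu+\lambda}\|F\|_{L^2}^2+\mu\|w\|_{L^2}^2$ as the energy functional (which, after adding $(C_1+1)\|P\|_{L^2}^2$, is comparable to $\|\nabla u\|_{L^2}^2+\|P\|_{L^2}^2$), whereas you separate $\nabla P$ out and track $\|\operatorname{div}u\|_{L^2}^2$, $\|w\|_{L^2}^2$, $\|P\|_{L^2}^2$ individually; these are equivalent bookkeeping.

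Two slips, both fixable but the second one matters for the stated range of $p_0$. First, the interpolation $\|\nabla u\|_{L^3}^3\le C\|\nabla u\|_{L^2}^2\|\nabla u\|_{L^6}$ has the wrong exponents; H\"older gives $\|\nabla u\|_{L^3}^3\le \|\nabla u\|_{L^2}^{3/2}\|\nabla u\|_{L^6}^{3/2}$, which still closes after substituting the elliptic bound on $\|\nabla u\|_{L^6}$ and absorbing the $\|\sqrt\rho\dot u\|_{L^2}$-power by Young, so the scheme survives. Second, and more seriously, your application of Young to $\|\operatorname{div}u\|_{L^2}\|P\|_{L^\infty}\|P\|_{L^2}\le\varepsilon\|\nabla u\|_{L^2}^2+C\|P\|_{L^\infty}^2\|P\|_{L^2}^2$ produces a Gronwall coefficient $\|P\|_{L^\infty}^2$ whose time integral is \emph{not} controlled by \eqref{12.1} when $1<p_0<2$; as written, the argument would close only for $p_0=2$. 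The paper avoids this by not splitting at all: it bounds $\|\operatorname{div}u\|_{L^2}\|P\|_{L^\infty}\|P\|_{L^2}\le \|P\|_{L^\infty}(\|\nabla u\|_{L^2}^2+\|P\|_{L^2}^2)$, keeping the first power of $\|P\|_{L^\infty}$ (see \eqref{2023}, \eqref{530}); then $\int_0^T\|P\|_{L^\infty}\,dt\le C$ follows from H\"older and $p_0>1$, and Gronwall closes for the full range $1<p_0\le 2$. You should delete the Young step and use the multiplicative bound directly; with that change and the corrected exponents the proposal is sound and matches the paper's proof.
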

\begin{proof}
It follows from $\eqref{FNSZ}_2$ that  the effective viscous flux $F$ solves a Neumann problem as follows:
\begin{equation}\begin{cases}\label{LAF}
\triangle F=\text{div}(\rho\dot{u})                     &\quad\text{in}  ~~\Omega,\\
\frac{\partial F}{\partial n}=\rho\dot{u}\cdot{n}        &\quad\text{on} ~~\partial\Omega.
\end{cases}\end{equation}
Meanwhile, $w$ solves the related Dirichlet problem:
\begin{equation}\begin{cases}\label{FAL}
\mu\triangle w=\nabla^{\perp}\cdot{(\rho\dot{u})}~~&\quad\text{in}~~\Omega,\\
w=0~~&\quad\text{on}~~\partial \Omega.
\end{cases}\end{equation}
\par
Then, applying the standard~$L^p$~elliptic regularity theory to~\eqref{LAF}~and~\eqref{FAL}, for~$k\geq0$~and~$p\in(1,\infty),$ we get
\begin{equation}\begin{aligned}
\|\nabla F||_{W^{k,p}}+||\nabla w||_{W^{k,p}}\leq C(p,k)\|\rho\dot{u}||_{W^{k,p}}.
\end{aligned}\end{equation}
In particular,
\begin{equation}\begin{aligned}\label{18.1}
\|\nabla F||_{L^2}+||\nabla w||_{L^2}\leq C\|\rho\dot{u}||_{L^2}.
\end{aligned}\end{equation}
Thus, for~$p\geq2,$~by virtue of \eqref{11.3}, Gagliardo-Nirenberg inequality and \eqref{18.1}, it follows that
\begin{equation}\begin{aligned}\label{184}
\|\nabla u||_{L^p}&\leq C(\|\text{div}u||_{L^p}+||w||_{L^p})\\
&\leq C(||F||_{L^p}+||w||_{L^p}+||P||_{L^p})\\
&\leq C(||F||_{L^2}^{\frac{2}{p}}\|\nabla F||^{1-\frac{2}{p}}_{L^2}+||F||_{L^2}+||w||^{\frac{2}{p}}_{L^2}\|\nabla w||^{1-\frac{2}{p}}_{L^2}+||P||_{L^p})\\
&\leq C\|\rho\dot{u}||^{1-\frac{2}{p}}_{L^2}(\|\nabla u||_{L^2}+||P||_{L^2})^{\frac{2}{p}}+C(\|\nabla u||_{L^2}+||P||_{L^p}).\\
\end{aligned}\end{equation}
Direct calculations show that
\begin{equation}\begin{aligned}
\nabla^{\perp}\cdot{\dot{u}}=&\frac{D}{Dt}w-(\partial_1u_1\partial_1u_2+\partial_1u_2\partial_2u_2)+\partial_2u_1\partial_1u_1+\partial_2u_2\partial_2u_1\\
=&\frac{D}{Dt}w+w\text{div}u,
\end{aligned}\end{equation}
and
\begin{equation}\begin{aligned}
\text{div}\dot{u}=&\frac{D}{Dt}\text{div}u+(\partial_1 u\cdot{\nabla})u_1+(\partial_2 u\cdot{\nabla})u_2\\
=&\frac{1}{2\mu+\lambda}\frac{D}{Dt}(F+P)+(\text{div}u)^2-2\nabla u_1\cdot{\nabla^{\perp}}u_2.
\end{aligned}\end{equation}
We rewrite the momentum equation as
\begin{equation}\begin{aligned}\label{12.23}
\rho\dot{u}=\nabla F+\mu\nabla^{\perp}w.
\end{aligned}\end{equation}
Multiplying $\eqref{12.23}$ by $\dot{u}$ and integrating over $\Omega$, thanks to \eqref{18.2} and the boundary conition \eqref{666}, we have
\begin{equation}\begin{aligned}\label{12.22}
&\frac{1}{2(2\mu+\lambda)}\frac{d}{dt}\int_{\Omega}F^2dx+\frac{\mu}{2}\frac{d}{dt}\int_{\Omega}w^2dx+\int_{\Omega}\rho|\dot{u}|^2dx\\
&=-\frac{\mu}{2}\int_{\Omega}w^2\text{div}udx-\int_{\Omega}F(\text{div}u)^2dx\\
&+2\int_{\Omega}F\nabla u_1\cdot{\nabla^{\perp}}u_2dx+\frac{1}{2(2\mu+\lambda)}\int_{\Omega}F^2\text{div}udx+\frac{2}{2\mu+\lambda}\int_{\Omega}FP\text{div}udx\\
&-\frac{1}{2\mu+\lambda}\int_{\Omega}F(\frac{\mu}{2}|\nabla u+(\nabla
u)^{tr}|^2+\lambda(\text{div}u)^2)dx+\int_{\partial\Omega}Fu\cdot{\nabla u}\cdot{n}ds\\
&\leq C\int_{\Omega}w^2\text{div}udx+C\int_{\Omega}F^2\text{div}udx\\
&+C\int_{\Omega}FP\text{div}udx+C\int_{\Omega}F|\nabla u|^2dx+C\int_{\partial\Omega}Fu\cdot{\nabla u}\cdot{n}ds\\
&=\sum_{i=1}^{5}H_i.
\end{aligned}\end{equation}
Now we estimate $H_i(i=1, 2, 3, 4, 5)$ respectively.

Together with Gagliardo-Nirenberg inequality and \eqref{18.1}, for~$\epsilon>0$~to be determined later, it yields that
\begin{equation}\begin{aligned}\label{12.31}
|H_1|=\left|\int_{\Omega}w^2\text{div}udx\right|\leq&C||w||^2_{L^4}\|\nabla u||_{L^2}\\
\leq&C||w||_{L^2}\|\nabla w||_{L^2}\|\nabla u||_{L^2}\\
\leq&\epsilon\|\sqrt{\rho}\dot{u}||^2_{L^2}+C(\epsilon)\|\nabla u||^4_{L^2}.
\end{aligned}\end{equation}
For~$\epsilon>0,$~it follows from H\"older inequality, Gagliardo-Nirenberg inequality and \eqref{18.1}, \eqref{10.3}  that
\begin{equation}\begin{aligned}
|H_2|=&\left|\int_{\Omega}F^2\text{div}udx\right|\leq C||F||^2_{L^4}\|\nabla u||_{L^2}\\
\leq&C\left(||F||_{L^2}\|\nabla F||_{L^2}+||F||^2_{L^2}\right)\|\nabla u||_{L^2}\\
\leq&C(||P||_{L^2}+\|\nabla u||_{L^2})\|\rho\dot{u}||_{L^2}\|\nabla u||_{L^2}+C(||P||_{L^2}+\|\nabla u||_{L^2})^2\|\nabla u||_{L^2}\\
\leq&\epsilon\|\sqrt{\rho}\dot{u}||^2_{L^2}+C(\epsilon)||P||^4_{L^2}+C(\epsilon)\|\nabla u||^4_{L^2}\\
\leq&\epsilon\|\sqrt{\rho}\dot{u}||^2_{L^2}+C(\epsilon)||P||_{L^1}||P||^3_{L^3}+C(\epsilon)\|\nabla u||^4_{L^2}\\
\leq&\epsilon\|\sqrt{\rho}\dot{u}||^2_{L^2}+C(\epsilon)\|\nabla u||^4_{L^2}+C(\epsilon)||P||^3_{L^3}.
\end{aligned}\end{equation}
Then, for~$\epsilon>0,$ using H\"older inequality, Gagliardo-Nirenberg inequality and \eqref{18.1}, \eqref{10.3} lead to
\begin{equation}\begin{aligned}
|H_3|=&\left|\int_{\Omega}FP\text{div}udx\right|\leq||P||_{L^3}\|\nabla u||_{L^2}||F||_{L^6}\\
\leq&C||P||_{L^3}\|\nabla u||_{L^2}(||F||^{\frac{1}{3}}_{L^2}\|\nabla F||^{\frac{2}{3}}_{L^2}+||F||_{L^2})\\
\leq&C||P||_{L^3}\|\nabla u||_{L^2}\left((||P||_{L^2}+\|\nabla u||_{L^2})^{\frac{1}{3}}\|\rho\dot{u}||^{\frac{2}{3}}_{L^2}+C||P||_{L^2}+\|\nabla
u||_{L^2}\right)\\
\leq&\epsilon\|\sqrt{\rho}\dot{u}||^2_{L^2}+C(\epsilon)\|\nabla u||^4_{L^2}+C(\epsilon)||P||^4_{L^2}+C(\epsilon)||P||^3_{L^3}\\
\leq&\epsilon\|\sqrt{\rho}\dot{u}||^2_{L^2}+C(\epsilon)\|\nabla u||^4_{L^2}+C(\epsilon)||P||_{L^1}||P||^3_{L^3}+C(\epsilon)||P||^3_{L^3}\\
\leq&\epsilon\|\sqrt{\rho}\dot{u}||^2_{L^2}+C(\epsilon)\|\nabla u||^4_{L^2}+C(\epsilon)||P||^3_{L^3}.
\end{aligned}\end{equation}
Use H\"older inequality, Gagliardo-Nirenberg inequality, and \eqref{18.1} and \eqref{184} to obtain
\begin{equation}\begin{aligned}
|H_4|=&\left|\int_{\Omega}F|\nabla u|^2dx\right|\leq C||F||_{L^3}\|\nabla u||^2_{L^3}\\
\leq&C\left(||F||^{\frac{2}{3}}_{L^2}\|\nabla F||^{\frac{1}{3}}_{L^2}+||F||_{L^2}\right)\B((||P||_{L^2}\\
&+\|\nabla u||_{L^2})^{\frac{4}{3}}\|\rho\dot{u}||^{\frac{2}{3}}_{L^2}+||P||^2_{L^3}+\|\nabla u||^2_{L^2}\B)\\
\leq&C(||P||_{L^2}+\|\nabla u||_{L^2})^2\|\rho\dot{u}||_{L^2}+C(||P||_{L^2}+\|\nabla
u||_{L^2})^{\frac{2}{3}}\|\rho\dot{u}||^{\frac{1}{3}}_{L^2}||P||^2_{L^3}\\
&+C\|\rho\dot{u}||^{\frac{2}{3}}_{L^2}(||P||_{L^2}+\|\nabla u||_{L^2})^{\frac{7}{3}}+C(||P||_{L^2}+\|\nabla u||_{L^2})||P||^2_{L^3}\\
&+C(||P||_{L^2}+\|\nabla u||_{L^2})^{\frac{8}{3}}_{L^2}\|\rho\dot{u}||^{\frac{1}{3}}_{L^2}+C(||P||_{L^2}+\|\nabla u||_{L^2})^3\\
\leq&\epsilon\|\sqrt{\rho}\dot{u}||^2_{L^2}+C(\epsilon)||P||^4_{L^2}+C(\epsilon)\|\nabla u||^4_{L^2}+C(\epsilon)||P||^3_{L^3}\\
\leq&\epsilon\|\sqrt{\rho}\dot{u}||^2_{L^2}+C(\epsilon)||P||_{L^1}||P||^3_{L^3}+C(\epsilon)\|\nabla u||^4_{L^2}+C||P||^3_{L^3}\\
\leq&\epsilon\|\sqrt{\rho}\dot{u}||^2_{L^2}+C(\epsilon)\|\nabla u||^4_{L^2}+C(\epsilon)||P||^3_{L^3},
\end{aligned}\end{equation}
where ~$\epsilon>0.$~
\par
Due to \eqref{13.1} and trace theorem, for~$\epsilon>0,$~we note that
\begin{equation}\begin{aligned}\label{12.32}
|H_5|=\left|\int_{\partial\Omega}Fu\cdot{\nabla u}\cdot{n}ds\right|=&\left|\int_{\partial\Omega}Fu\cdot{\nabla n}\cdot{u}ds\right|\\
\leq&C||F||_{L^2(\partial\Omega)}||u||^2_{L^4(\partial\Omega)}\\
\leq&C||F||_{H^1(\Omega)}\|\nabla u||^2_{L^2(\Omega)}\\
\leq&\epsilon\|\sqrt{\rho}\dot{u}||^2_{L^2}+C(\epsilon)\|\nabla u||^4_{L^2}.
\end{aligned}\end{equation}
Together with \eqref{12.31}-\eqref{12.32}, and taking $\epsilon$ sufficiently small we finally get
\begin{equation}\begin{aligned}\label{2022}
&\frac{1}{2}\frac{d}{dt}\int_{\Omega}\left(\frac{1}{(2\mu+\lambda)}P^2+(2\mu+\lambda)(\text{div}u)^2-2P\text{div}u+\mu
w^2\right)dx+\int_{\Omega}\rho|\dot{u}|^2dx\\
\leq&C\|\nabla u||^4_{L^2}+C||P||^3_{L^3}\\
\leq&C\|\nabla u||^4_{L^2}+C||P||_{L^\infty}||P||^2_{L^2}\\
\leq&C(||P||_{L^\infty}+\|\nabla u||^2_{L^2})(\|\nabla u||^2_{L^2}+||P||^2_{L^2}).
\end{aligned}\end{equation}
Multiplying $\eqref{FNSZ}_3$ by $P,$ and using the boundary condition \eqref{666} and
integrating over $\Omega,$ we obtain after integration by parts that
\begin{equation}\begin{aligned}\label{2023}
\frac{1}{2}\frac{d}{dt}\int_{\Omega}P^2dx=&\int_{\Omega}(\frac{\mu}{2}\left|\nabla u+(\nabla
u)^{\text{tr}}\right|^2+\lambda(\text{div}u)^2-\frac{3}{2}P\text{div}u)Pdx\\
\leq&C||P||_{L^\infty}\left(\|\nabla u||^2_{L^2}+||P||^2_{L^2}\right).\\
\end{aligned}\end{equation}
Noting that
\begin{equation}\begin{aligned}
&\left|\int_{\Omega}P\text{div}udx\right|\leq \frac{\mu}{4}\int_{\Omega}(\text{div}u)^2dx+\frac{C_1}{2}\int_{\Omega}P^2dx.\\
\end{aligned}\end{equation}
Multiplying $(C_1+1)$ on both sides of \eqref{2023} and then adding  to \eqref{2022}, we get
\begin{equation}\begin{aligned}\label{530}
&\frac{1}{2}\frac{d}{dt}\int_{\Omega}\left(\frac{1}{(2\mu+\lambda)}P^2+(2\mu+\lambda)(\text{div}u)^2-2P\text{div}u+\mu
w^2+(C_1+1)P^2\right)dx\\
&+\int_{\Omega}\rho|\dot{u}|^2dx\leq C(||P||_{L^\infty}+\|\nabla u||^2_{L^2})(\|\nabla u||^2_{L^2}+||P||^2_{L^2}).
\end{aligned}\end{equation}
Note that
\begin{equation}\begin{aligned}\label{531}
&\int_{\Omega}\left(\mu w^2+(2\mu+\lambda)(\text{div}u)^2-2P\text{div}u+(C_1+1)P^2\right)dx\geq \int_{\Omega}(|\nabla u|^2+P^2)dx.
\end{aligned}\end{equation}
It follows from Gronwall's inequality \eqref{531}, \eqref{12.1} and \eqref{10.3} that
\begin{equation}\begin{aligned}
\sup_{0\leq t\leq T}(\|\nabla u||^2_{L^2}+||P||^2_{L^2})+\int_{0}^{T}\int_{\Omega}\rho|\dot{u}|^2dxdt\leq C.
\end{aligned}\end{equation}

The proof of Lemma \ref{ZZY} is complete.
\end{proof}

\begin{lemma}
For~$p\geq 1$, there exist positive constants ~$C_{2}(p,\Omega)$~and~$C_{3}(\Omega)$~such that
\begin{equation}\begin{aligned}\label{10.9}
\|\dot{u}||_{L^p}\leq C_2(\|\nabla\dot{u}||_{L^2}+\|\nabla u||^2_{L^2}),
\end{aligned}\end{equation}
\begin{equation}\begin{aligned}\label{1110}
\|\nabla\dot{u}||_{L^2}\leq C_3(\|\text{div\,}\dot{u}||_{L^2}+\|\text{curl\,}\dot{u}||^2_{L^2}+\|\nabla u||^2_{L^4}).
\end{aligned}\end{equation}
\end{lemma}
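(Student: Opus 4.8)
My plan is to treat both bounds as two-dimensional Sobolev/div--curl estimates, the only genuinely delicate point being that $\dot u=u_t+u\cdot\nabla u$ does \emph{not} inherit the Navier-slip condition $\dot u\cdot n=0$, so that Lemma~\ref{LAJ} cannot be applied to $\dot u$ directly. I would start by recording the boundary identity obtained by differentiating $u\cdot n|_{\partial\Omega}=0$ in time (the normal $n$ is time-independent) and combining with \eqref{13.1}: since $u_t\cdot n=\partial_t(u\cdot n)=0$ on $\partial\Omega$,
\begin{equation}\nonumber
\dot u\cdot n=(u\cdot\nabla u)\cdot n=-\,u\cdot\nabla n\cdot u\qquad\text{on}\ \partial\Omega .
\end{equation}
Thus the normal trace of $\dot u$ is \emph{quadratic} in $u$, and $u$ is uniformly bounded in $H^1$ by Lemma~\ref{ZZY} together with the Poincar\'e inequality for vector fields with vanishing normal component on a simply connected domain; this is exactly what makes the boundary defect harmless.

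For \eqref{10.9}: when $1\le p\le 2$ I would just use H\"older's inequality on the bounded domain, so assume $2\le p<\infty$. By the two-dimensional Gagliardo--Nirenberg inequality $\|\dot u\|_{L^p}\le C\|\dot u\|_{L^2}^{2/p}\|\dot u\|_{H^1}^{1-2/p}\le C(\|\dot u\|_{L^2}+\|\nabla\dot u\|_{L^2})$, it suffices to bound $\|\dot u\|_{L^2}$. For this I would invoke a div--curl inequality for vector fields with controlled normal trace, $\|\dot u\|_{L^2}\le C(\|\text{div}\,\dot u\|_{L^2}+\|\text{curl}\,\dot u\|_{L^2}+\|\dot u\cdot n\|_{L^2(\partial\Omega)})\le C(\|\nabla\dot u\|_{L^2}+\|\dot u\cdot n\|_{L^2(\partial\Omega)})$, and then estimate the boundary term via the identity above, the trace embedding $H^1(\Omega)\hookrightarrow L^4(\partial\Omega)$ and Lemma~\ref{ZZY}: $\|\dot u\cdot n\|_{L^2(\partial\Omega)}\le C\|u\|_{L^4(\partial\Omega)}^2\le C\|u\|_{H^1}^2\le C\|\nabla u\|_{L^2}^2$. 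Inserting this back yields \eqref{10.9}.

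For \eqref{1110}: I would decompose $\dot u=\nabla\varphi+v$, where $\varphi$ solves the Neumann problem $\Delta\varphi=\tfrac1{|\Omega|}\int_\Omega\text{div}\,\dot u\,dx$ in $\Omega$, $\partial_n\varphi=\dot u\cdot n$ on $\partial\Omega$ (solvable since $\int_\Omega\text{div}\,\dot u=\int_{\partial\Omega}\dot u\cdot n$). Then $v\cdot n=0$ on $\partial\Omega$, $\text{curl}\,v=\text{curl}\,\dot u$ and $\text{div}\,v=\text{div}\,\dot u-\tfrac1{|\Omega|}\int_\Omega\text{div}\,\dot u$, so Lemma~\ref{LAJ} gives $\|\nabla v\|_{L^2}\le C(\|\text{div}\,\dot u\|_{L^2}+\|\text{curl}\,\dot u\|_{L^2})$. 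For the potential part I would use $L^2$ elliptic regularity for the Neumann Laplacian, $\|\nabla^2\varphi\|_{L^2}\le C(\|\text{div}\,\dot u\|_{L^2}+\|\dot u\cdot n\|_{H^{1/2}(\partial\Omega)})$, and then, using $\dot u\cdot n=-u\cdot\nabla n\cdot u$, the trace theorem and $\|u\|_{H^1}\le C$, $\|\dot u\cdot n\|_{H^{1/2}(\partial\Omega)}\le C\|u\otimes u\|_{H^1(\Omega)}\le C(\|u\|_{L^4}^2+\|u\|_{L^4}\|\nabla u\|_{L^4})\le C(1+\|\nabla u\|_{L^4}^2)$. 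Adding the two contributions and absorbing constants (replacing first powers by squares via $a\le 1+a^2$ where needed) gives \eqref{1110}.

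I expect the main obstacle to be precisely the non-vanishing normal trace of $\dot u$ and the need to route it correctly through the trace theorem and the elliptic estimates; once the quadratic identity $\dot u\cdot n=-u\cdot\nabla n\cdot u$ on $\partial\Omega$ and the uniform $H^1$-bound for $u$ from Lemma~\ref{ZZY} are available, everything else reduces to standard two-dimensional Sobolev embeddings and $L^2$ regularity for $-\Delta$ with Neumann data.
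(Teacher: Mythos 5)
Your proof is correct; the paper itself gives no argument here, deferring entirely to Cai--Li \cite{[JM]}, and your treatment --- the boundary identity $\dot u\cdot n=-u\cdot\nabla n\cdot u$ obtained by differentiating $u\cdot n=0$, a Poincar\'e-type bound for $\|\dot u\|_{L^2}$, and a Neumann-potential decomposition to absorb the nonzero normal trace in the div--curl estimate --- is exactly the standard strategy of that reference. One small remark: the square on $\|\text{curl}\,\dot u\|_{L^2}$ in \eqref{1110} is evidently a typo for a first power, as your computation (and scaling consistency) confirms, and the appeal to the uniform $H^1$ bound of Lemma~\ref{ZZY} is not actually needed since the $\|\nabla u\|_{L^2}^2$ and $\|\nabla u\|_{L^4}^2$ terms on the right-hand sides already absorb those quantities directly.
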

\begin{proof}For more details we refer the readers to \cite{[JM]} and we omit them here.
\end{proof}
In the following lemma, we will give the estimate on $\nabla\dot{u}.$
\begin{lemma}\label{BBA} Suppose that \eqref{12.1} is valid. Then there holds
\begin{equation}\begin{aligned}\label{201.3}
\sup_{0\leq t\leq T}\int_{\Omega}\left(\rho|\dot{u}|^2+P^4\right)dx+\int_{0}^{T}\int_{\Omega}|\nabla\dot{u}|^2dx\leq C,
\end{aligned}\end{equation}
for any ~$T\in[0,T^{\ast}).$~
\end{lemma}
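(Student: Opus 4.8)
The plan is to establish \eqref{201.3} by a Hoff-type higher-order energy estimate for $\sqrt{\rho}\,\dot u$, coupled with an $L^4$-in-space estimate for the pressure, closing both at once through a single Gronwall inequality whose coefficients are integrable in time by virtue of \eqref{12.1}. First I would operate with $\partial_t+\operatorname{div}(u\,\cdot\,)$ on the momentum equation in the form \eqref{12.23}, multiply by $\dot u$, and integrate over $\Omega$; by the continuity equation the left-hand side becomes $\frac12\frac{d}{dt}\int_\Omega\rho|\dot u|^2\,dx$. Using the commutator identity $[\partial_t+\operatorname{div}(u\,\cdot\,)]\partial_jF=\partial_j\dot F-\partial_ju_k\partial_kF+\operatorname{div}u\,\partial_jF$ and its analogue for $\nabla^{\perp}w$, the expressions for $\dot F$, $\operatorname{div}\dot u$ and $\nabla^{\perp}\!\cdot\dot u$ computed just before \eqref{12.23} (with $\dot P=-2P\operatorname{div}u+G$ from \eqref{18.2}), and the fact that $w=\dot w=0$ on $\partial\Omega$, an integration by parts yields the dissipative terms $-(2\mu+\lambda)\int_\Omega(\operatorname{div}\dot u)^2-\mu\int_\Omega\dot w^2$, which together with $\operatorname{curl}\dot u=\dot w+w\operatorname{div}u$ and \eqref{1110} dominate $c\,\|\nabla\dot u\|_{L^2}^2$ modulo a harmless multiple of $\|\nabla u\|_{L^4}^4$, plus a sum of bulk error terms and the two boundary integrals $\int_{\partial\Omega}(\dot u\cdot n)F_t\,ds$ and $\int_{\partial\Omega}(\dot u\cdot n)(u\cdot\nabla F)\,ds$ flagged in the Introduction.

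The bulk error terms are all of the schematic shape $\int_\Omega(\nabla\dot u\ \text{or}\ \dot u)\cdot(\text{products of }\nabla u,\,F,\,w,\,P)$; by H\"older's inequality, the Gagliardo--Nirenberg inequality, \eqref{10.9}, \eqref{18.1} and Lemma \ref{ZZY} (which bounds $\|\nabla u\|_{L^2}+\|P\|_{L^2}$), each is controlled by $\varepsilon\|\nabla\dot u\|_{L^2}^2+C\|\nabla u\|_{L^4}^2\|\rho\dot u\|_{L^2}^2+C(\|\nabla u\|_{L^4}^4+\|P\|_{L^4}^4+1)$. The boundary integrals are the crux: naively $F_t$ and $u\cdot\nabla F$ carry a derivative of $\operatorname{div}\dot u$ (equivalently of $\nabla\dot u$), more than the trace theorem allows. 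Here I would use the Navier-slip identities \eqref{13.1} --- in particular $u_t\cdot n=0$ on $\partial\Omega$, whence $\dot u\cdot n=(u\cdot\nabla u)\cdot n=-(u\cdot\nabla n)\cdot u$ there, together with $u=(u\cdot n^{\perp})n^{\perp}$ --- and the Green-formula conversion \eqref{13.3}, to turn each boundary integral into a bulk integral; for instance $\int_{\partial\Omega}(u\cdot\nabla F)(\dot u\cdot n)\,ds=\int_\Omega\nabla F\cdot\nabla^{\perp}\!\big((u\cdot n^{\perp})(\dot u\cdot n)\big)\,dx$ as in the Introduction, whose leading piece $\int_\Omega(u\cdot n^{\perp})\,n_k\,\nabla F\cdot\nabla^{\perp}\dot u_k\,dx$ is bounded by $\|\nabla F\|_{L^2}\|u\|_{L^\infty}\|\nabla\dot u\|_{L^2}\le\varepsilon\|\nabla\dot u\|_{L^2}^2+C\|\rho\dot u\|_{L^2}^2(1+\|\nabla u\|_{L^4}^2)$ via \eqref{18.1} and the $2$D embedding $\|u\|_{L^\infty}\le C(1+\|\nabla u\|_{L^4})$; the remaining pieces and the $F_t$-integral are treated in the same spirit, each time leaving at most one factor of $\nabla\dot u\in L^2$ to be absorbed into the dissipation.

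For the pressure I would multiply \eqref{18.2} by $P^3$ and integrate, using $u\cdot n=0$ on $\partial\Omega$ to turn the convective term into $\tfrac14\int_\Omega P^4\operatorname{div}u\,dx$ and $G\le C|\nabla u|^2$, which gives $\frac{d}{dt}\|P\|_{L^4}^4\le C\|P\|_{L^\infty}(\|P\|_{L^4}^4+\|\nabla u\|_{L^4}^4+1)$. By \eqref{184} with $p=4$ and Lemma \ref{ZZY} one has $\|\nabla u\|_{L^4}^4\le C\|\rho\dot u\|_{L^2}^2+C\|P\|_{L^4}^4+C$ and $\|P\|_{L^4}^2\le C\|P\|_{L^\infty}$, so that $\int_0^T\|\nabla u\|_{L^4}^2\,dt\le C$ (using again Lemma \ref{ZZY} and \eqref{12.1}). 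Adding all the inequalities and choosing $\varepsilon$ small, the quantity $y(t)=\|\sqrt{\rho}\,\dot u(t)\|_{L^2}^2+\|P(t)\|_{L^4}^4$ satisfies
\[
y'(t)+c\,\|\nabla\dot u(t)\|_{L^2}^2\le A(t)\,\big(y(t)+1\big),\qquad A(t)=C\big(1+\|P(t)\|_{L^\infty}+\|\nabla u(t)\|_{L^4}^2\big)\in L^1(0,T),
\]
the integrability of $A$ being exactly where $p_0>1$ (hence $1<p_0\le2$) enters. Since the compatibility condition \eqref{392} gives $\sqrt{\rho}\,\dot u(0)=-\sqrt{\rho_0}\,g$, so that $\|\sqrt{\rho}\,\dot u(0)\|_{L^2}\le\|g\|_{L^2}<\infty$, and $\|P_0\|_{L^4}\le C\|P_0\|_{L^\infty}<\infty$, Gronwall's inequality yields $\sup_{[0,T]}y\le C$ together with $\int_0^T\|\nabla\dot u\|_{L^2}^2\,dt\le C$, which is \eqref{201.3}. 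The hardest part, as anticipated, is the two boundary integrals, and it is precisely handled by the geometric identities \eqref{13.1}--\eqref{13.3} that exploit $u\cdot n\equiv0$ on $\partial\Omega$.
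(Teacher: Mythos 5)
Your overall strategy (Hoff-type estimate on $\sqrt{\rho}\,\dot u$ coupled with an $L^4$ estimate on $P$, coefficient integrability via \eqref{12.1}, Gronwall) coincides with the paper's, and the pressure part, the bulk error terms, and the $\int_{\partial\Omega}(u\cdot\nabla F)(\dot u\cdot n)\,ds$ boundary term are handled correctly and as in the paper. However there is a genuine gap in your treatment of the other boundary integral, $\int_{\partial\Omega}F_t(\dot u\cdot n)\,ds$, which you claim is ``treated in the same spirit.'' It cannot be: after substituting $\dot u\cdot n=-(u\cdot\nabla n)\cdot u$ and applying the Green-formula conversion \eqref{13.3}, the resulting bulk integral necessarily carries $\nabla^\perp F_t=\nabla^\perp\bigl((2\mu+\lambda)\operatorname{div}u_t-P_t\bigr)$, i.e.\ three derivatives' worth of regularity on $u$, which nothing in the available \emph{a priori} bounds controls. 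The paper avoids this with a different device: it first integrates by parts \emph{in time}, writing
\begin{equation*}
\int_{\partial\Omega}F_t(\dot u\cdot n)\,ds=-\frac{d}{dt}\int_{\partial\Omega}F(u\cdot\nabla n\cdot u)\,ds+\int_{\partial\Omega}F\,\partial_t(u\cdot\nabla n\cdot u)\,ds,
\end{equation*}
then expands $\partial_t(u\cdot\nabla n\cdot u)$ using $u_t=\dot u-u\cdot\nabla u$ to get the three manageable pieces $Q_1,Q_2,Q_3$ (each of which has at most $F$, $\dot u$, $u$, $\nabla u$ and can be bounded after conversion to bulk integrals), and carries the total time derivative $-\frac{d}{dt}\int_{\partial\Omega}F(u\cdot\nabla n\cdot u)\,ds$ to the left-hand side, where after time integration it is bounded by $C(1+\epsilon\|\sqrt{\rho}\dot u\|_{L^2}^2)$ via the trace estimate \eqref{19.144}. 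Without this time-derivative extraction your differential inequality cannot be closed.

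A smaller inaccuracy: you attribute the need for $p_0>1$ to the integrability of your Gronwall coefficient $A(t)$ in this lemma, but in fact $\int_0^T\|P\|_{L^\infty}\,dt\le M$ and $\int_0^T\|\nabla u\|_{L^4}^2\,dt\le C$ already hold for $p_0=1$; the strict inequality $p_0>1$ is genuinely needed only in Lemma \ref{LGZ}, where the estimate \eqref{549}--\eqref{1128} requires $\|P\|_{L^\infty}^{p_0}$ with $p_0>1$ coming from a Young-type absorption. This does not affect the validity of the present lemma but misidentifies where the hypothesis is sharp.
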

\begin{proof}
Applying the operator~$\dot{u_j}[\partial/\partial t+\text{div}(u\cdot)]$~to~$\eqref{FNS}_j,$ and combining with the boundary condition
\eqref{666}, we obtain after integrating the resulting equation over $\Omega$,
\begin{equation}\begin{aligned}\label{12.5}
\frac{1}{2}\frac{d}{dt}\int_{\Omega}\rho|\dot{u}|^2dx&=\int_{\Omega}(\dot{u}\cdot{\nabla F_t}+\dot{u}_{j}\text{div}(\partial_{j} F u))dx\\
+&\mu\int_{\Omega}(\dot{u}\cdot{\nabla^{\perp} w_t}+\dot{u}_{j}\text{div}((\nabla^{\perp}w)_j u))dx\\
&\triangleq I_1+I_2.
\end{aligned}\end{equation}
Due to the boundary condition~\eqref{666},~it follows that
\begin{equation}\begin{aligned}\label{10.7}
I_1=&\int_{\Omega}(\dot{u}\cdot{\nabla F_t}+\dot{u}_{j}\text{div}(\partial_{j} F u))dx\\
=&\int_{\Omega}\left(\dot{u}\cdot{\nabla F_t}+\dot{u}_{j}\partial_j(u\cdot{\nabla F})+\dot{u}_{j}\partial_j F\text{div}u-\dot{u}\cdot{\nabla
u}\cdot{\nabla F}\right)dx\\
=&\int_{\partial\Omega}(F_t+u\cdot{\nabla F})({\dot{u}\cdot{n}})ds-\int_{\Omega}(F_t+u\cdot{\nabla F})\text{div}\dot{u}dx\\
+&\int_{\Omega}(\dot{u}_{j}\partial_j F\text{div}u-\dot{u}\cdot{\nabla u}\cdot{\nabla F})dx\\
=&\int_{\partial\Omega}F_t(\dot{u}\cdot{n})ds+\int_{\partial\Omega}u\cdot{\nabla
F}(\dot{u}\cdot{n})ds-(2\mu+\lambda)\int_{\Omega}(\text{div}\dot{u})^2dx\\
+&(2\mu+\lambda)\int_{\Omega}\text{div}\dot{u}\partial_iu_j\partial_ju_idx-2\int_{\Omega}P\text{div}u\text{div}\dot{u}dx\\
&+\int_{\Omega}\left(\frac{\mu}{2}\left|\nabla u+(\nabla u)^{\text{tr}}\right|^2+\lambda(\text{div}u)^2\right)\text{div}\dot{u}dx\\
&+\int_{\Omega}(\dot{u}_{j}\partial_j F\text{div}u-\dot{u}\cdot{\nabla u}\cdot{\nabla F})dx\\
\leq&\int_{\partial\Omega}F_t(\dot{u}\cdot{n})ds+\int_{\partial\Omega}u\cdot{\nabla
F}(\dot{u}\cdot{n})ds-\mu\int_{\Omega}(\text{div}\dot{u})^2dx+||P||^4_{L^4}\\
&+C\|\nabla u||^4_{L^4}+C\int_{\Omega}|\dot{u}||\nabla F||\nabla u|dx,
\end{aligned}\end{equation}
where in the third equality we have used
\begin{equation}\begin{aligned}
&F_t+u\cdot{\nabla F}=((2\mu+\lambda)\text{div}u-P)_{t}+u\cdot{\nabla((2\mu+\lambda)\text{div}u-P)}\\
=&(2\mu+\lambda)\text{div}u_t-P_t+(2\mu+\lambda)u\cdot{\nabla\text{div}u}-u\cdot{\nabla P}\\
=&(2\mu+\lambda)\text{div}(\dot{u}-u\cdot{\nabla u})+(2\mu+\lambda)u\cdot{\nabla\text{div}u}-(P_t+u\cdot{\nabla P})\\
=&(2\mu+\lambda)\text{div}\dot{u}-(2\mu+\lambda)\partial_i(u_j\partial_ju_i)+(2\mu+\lambda)u_j\partial_j\partial_iu_i\\
&+2P\text{div}u-\frac{\mu}{2}\left|\nabla u+(\nabla u)^{\text{tr}}\right|^2-\lambda(\text{div}u)^2\\
=&(2\mu+\lambda)\text{div}\dot{u}-(2\mu+\lambda)\partial_iu_j\partial_ju_i+2P\text{div}u-\frac{\mu}{2}\left|\nabla u+(\nabla
u)^{\text{tr}}\right|^2-\lambda(\text{div}u)^2.
\end{aligned}\end{equation}
To estimate the first boundary term on the right hand side of \eqref{10.7}, we note that
\begin{equation}\begin{aligned}\label{11.10}
&\int_{\partial\Omega}F_t(\dot{u}\cdot{n})ds\\
=&\int_{\partial\Omega}F_t(u\cdot{\nabla u}\cdot{n})ds\\
=&-\frac{d}{dt}\int_{\partial\Omega}F(u\cdot{\nabla n}\cdot{u})ds+\int_{\partial\Omega}F(u\cdot{\nabla n}\cdot{u})_{t}ds\\
=&-\frac{d}{dt}\int_{\partial\Omega}F(u\cdot{\nabla n}\cdot{u})ds+\int_{\partial\Omega}F(u_t\cdot{\nabla
n}\cdot{u})ds+\int_{\partial\Omega}F(u\cdot{\nabla n}\cdot{u_t})ds\\
=&-\frac{d}{dt}\int_{\partial\Omega}F(u\cdot{\nabla n}\cdot{u})ds+\left[\int_{\partial\Omega}F(\dot{u}\cdot{\nabla
n}\cdot{u})ds+\int_{\partial\Omega}F(u\cdot{\nabla n}\cdot{\dot{u}})ds\right]\\
&-\int_{\partial\Omega}F((u\cdot{\nabla u})\cdot{\nabla n}\cdot{u})ds-\int_{\partial\Omega}F(u\cdot{\nabla n}\cdot({u\cdot{\nabla u}}))ds\\
=&-\frac{d}{dt}\int_{\partial\Omega}F(u\cdot{\nabla n}\cdot{u})ds+Q_1+Q_2+Q_3.
\end{aligned}\end{equation}
Then, it follows \eqref{18.1}, \eqref{12.2}, \eqref{10.9} that
\begin{equation}\begin{aligned}\label{12.9}
Q_1=&\int_{\partial\Omega}F(\dot{u}\cdot{\nabla n}\cdot{u})ds+\int_{\partial\Omega}F(u\cdot{\nabla n}\cdot{\dot{u}})ds\\
\leq&C||u||_{H^1}\|\dot{u}||_{H^1}||F||_{H^1}\\
\leq&C\|\nabla u||_{L^2}(\|\nabla\dot{u}||_{L^2}+\|\nabla u||^2_{L^2})(||P||_{L^2}+\|\nabla u||_{L^2}+\|\sqrt{\rho}\dot{u}||_{L^2})\\
\leq&C(1+\|\nabla\dot{u}||_{L^2})(1+\|\sqrt{\rho}\dot{u}||_{L^2})\\
\leq&C\epsilon\|\nabla\dot{u}||^2_{L^2}+C(\epsilon)\|\sqrt{\rho}\dot{u}||^2_{L^2}+C(\epsilon).
\end{aligned}\end{equation}
From~\eqref{13.1}, \eqref{18.1}, we deduce that
\begin{equation}\begin{aligned}\label{12.8}
|Q_2|=&\left|\int_{\partial\Omega}F((u\cdot{\nabla u})\cdot{\nabla n}\cdot{u})ds\right|\\
=&\left|\int_{\partial\Omega}F(u\cdot{n^{\perp}})n^{\perp}\cdot{\nabla u_i\partial_i n_ju_j}ds\right|\\
=&\left|\int_{\Omega}\nabla^{\perp}\cdot{\left(\nabla u_{i}\partial_{i}n_{j}u_{j}F(u\cdot{n^{\perp}})\right)}dx\right|\\
=&\left|\int_{\Omega}\nabla u_{i}\cdot{\nabla^{\perp}\left(\partial_{i}n_{j}u_{j}F(u\cdot{n^{\perp}}\right)}dx\right|\\
\leq&C\int_{\Omega}|\nabla u|(|F||u|^2+|F||\nabla u||u|+|u|^2|\nabla F|)dx\\
\leq&C\|\nabla u||_{L^4}||F||_{L^4}||u||^2_{L^4}+\|\nabla u||^2_{L^4}||F||_{L^4}||u||_{L^4}+\|\nabla u||_{L^4}||u||^2_{L^8}\|\nabla F||_{L^2}\\
\leq&C\|\nabla u||^4_{L^4}+C||F||^2_{H^1}+C\\
\leq&C\|\nabla u||^4_{L^4}+C\|\rho\dot{u}||^2_{L^2}+C\|\nabla u||^2_{L^2}+C||P||^2_{L^2}\\
\leq&C\|\nabla u||^4_{L^4}+C\|\sqrt{\rho}\dot{u}||^2_{L^2}+C.\\
\end{aligned}\end{equation}
The term~$Q_3$~can be handled in a similar way
\begin{equation}\begin{aligned}\label{12.6}
|Q_3|\leq C\|\nabla u||^4_{L^4}+C\|\sqrt{\rho}\dot{u}||^2_{L^2}+C.
\end{aligned}\end{equation}
According to \eqref{12.9}-\eqref{12.6}, we have
\begin{equation}\begin{aligned}\label{19.6}
\int_{\partial\Omega}F_t(\dot{u}\cdot{n})ds\leq&-\frac{d}{dt}\int_{\partial\Omega}F(u\cdot{\nabla n}\cdot{u})ds+C\epsilon\|\nabla\dot{u}||^2_{L^2}\\
&+C(\epsilon)(\|\nabla u||^4_{L^4}+1+\|\sqrt{\rho}\dot{u}||^2_{L^2}).\\
\end{aligned}\end{equation}
For the second  boundary term  on the right hand side of \eqref{10.7}
\begin{equation}\begin{aligned}\label{778}
&\int_{\partial\Omega}(u\cdot{\nabla F})(\dot{u}\cdot{n})ds=\int_{\partial\Omega}(u\cdot{n^{\perp}})n^{\perp}\cdot{\nabla F(\dot{u}\cdot{n})}ds\\
=&\int_{\Omega}\nabla^{\perp}\cdot(({u\cdot{n^{\perp}})\nabla F(\dot{u}\cdot{n})})dx\\
=&\int_{\Omega}\nabla F\cdot{\nabla^{\perp}((u\cdot{n^{\perp}})(\dot{u}\cdot{n}))}dx\\
\leq&C\int_{\Omega}|\nabla F|\nabla u||\dot{u}|dx+C\int_{\Omega}|\nabla F||\nabla\dot{u}||u|dx\\
\leq&C\|\nabla F||_{L^2}\|\dot{u}||_{L^4}\|\nabla u||_{L^4}+C\|\nabla F||_{L^2}\|\nabla\dot{u}||_{L^2}||u||^{\frac{3}{4}}_{L^{12}}\|\nabla
u||^{\frac{1}{4}}_{L^4}\\
\leq&C\|\rho\dot{u}||_{L^2}(\|\nabla\dot{u}||_{L^2}+\|\nabla u||^2_{L^2})\|\nabla u||_{L^4}+C\|\rho\dot{u}||_{L^2}\|\nabla\dot{u}||_{L^2}\|\nabla
u||^{\frac{1}{4}}_{L^4}\\
\leq&C\epsilon\|\nabla\dot{u}||^2_{L^2}+C(\epsilon)\|\sqrt{\rho}\dot{u}||^2_{L^2}(1+\|\nabla u||^2_{L^4}).\\
\end{aligned}\end{equation}
Substituting~\eqref{778}~and \eqref{19.6} into \eqref{10.7} indicates that
\begin{equation}\begin{aligned}\label{18.99}
&I_1\leq -\mu\int_{\Omega}(\text{div}\dot{u})^2dx-\frac{d}{dt}\int_{\partial\Omega}F(u\cdot{\nabla n}\cdot{u})ds\\
+&C\epsilon\|\nabla\dot{u}||^2_{L^2}+C(\epsilon)\|\nabla u||^4_{L^4}+C(\epsilon)\|\sqrt{\rho}\dot{u}||^2_{L^2}(1+\|\nabla u||^2_{L^4})+C||P||^4_{L^4}+C(\epsilon).
\end{aligned}\end{equation}
It remains to estimate~$I_2$. Together with boundary condition~\eqref{666}~and the following fact,
\begin{equation}\begin{aligned}
\text{div}(uw)=\text{div}u\,w+u\cdot{\nabla w}=0,\quad\text{on}\quad{\partial\Omega,}
\end{aligned}\end{equation}
we get
\begin{equation}\begin{aligned}\label{12.3}
I_2=&\mu\int_{\Omega}(\dot{u}\cdot{\nabla^{\perp}w_t}+\dot{u_j}((\text{div}(\nabla^{\perp}w)_{j}u))dx\\
=&-\mu\int_{\Omega}\text{curl}\dot{u}w_{t}dx+\mu\int_{\Omega}\dot{u}_j\text{div}((\nabla^{\perp}w)_ju)dx\\
=&-\mu\int_{\Omega}(\text{curl}\dot{u})^2dx+\mu\int_{\Omega}\text{curl}\dot{u}\text{curl}(u\cdot{\nabla
u})dx-\mu\int_{\Omega}u\cdot{\nabla{\dot{u}}}\cdot({\nabla^{\perp}w})dx\\
=&-\mu\int_{\Omega}(\text{curl}\dot{u})^2dx+\mu\int_{\Omega}\text{curl}\dot{u}(\nabla^{\perp}u)^T:\nabla udx\\
+&\mu\int_{\Omega}\text{curl}\dot{u}u\cdot{\nabla w}dx
-\mu\int_{\Omega}u\cdot{\nabla{\dot{u}}}\cdot{\nabla^{\perp}w}dx\\
=&-\mu\int_{\Omega}(\text{curl}\dot{u})^2dx+\mu\int_{\Omega}\text{curl}\dot{u}(\nabla^{\perp}u)^T:\nabla udx\\
+&\mu\int_{\Omega}\text{curl}\dot{u}\,(\text{div}(u w)-\text{div}u\,w)dx+\mu\int_{\Omega}\nabla^{\perp}u\cdot{\nabla{\dot{u}}}wdx\\
&+\mu\int_{\Omega}u\cdot{\nabla\text{curl}\dot{u}}wdx\\
=&-\mu\int_{\Omega}(\text{curl}\dot{u})^2dx+\mu\int_{\Omega}\text{curl}\dot{u}(\nabla^{\perp}u)^T:\nabla udx\\
-&\mu\int_{\Omega}u\cdot{\nabla\text{curl}\dot{u}\,} w-\mu\int_{\Omega}\text{curl}\dot{u}\,\text{div}u\,wdx+\mu\int_{\Omega}\nabla^{\perp}u\cdot{\nabla{\dot{u}}}wdx\\
&+\mu\int_{\Omega}u\cdot{\nabla\text{curl}\dot{u}}wdx\\
=&-\mu\int_{\Omega}(\text{curl}\dot{u})^2dx+\mu\int_{\Omega}\text{curl}\dot{u}(\nabla^{\perp}u)^T:\nabla udx\\
-&\mu\int_{\Omega}\text{curl}\dot{u}\,\text{div}u\,wdx+\mu\int_{\Omega}\nabla^{\perp}u\cdot{\nabla\dot{u}}wdx\\
\leq&-\mu\int_{\Omega}(\text{curl}\dot{u})^2dx+C\epsilon\|\nabla\dot{u}||^2_{L^2}+C(\epsilon)\|\nabla u||^4_{L^4}.
\end{aligned}\end{equation}
Now we show the estimate of~$\|\nabla u||_{L^4}.$  According to \eqref{184}, \eqref{12.2}, we can easily get
\begin{equation}\begin{aligned}\label{991}
\|\nabla u||^4_{L^4}\leq&C\left(\|\rho\dot{u}||^2_{L^2}(\|\nabla u||^2_{L^2}+||P||^2_{L^2})+||P||^4_{L^4}+\|\nabla u||^4_{L^2}\right)\\
\leq&C\|\sqrt{\rho}\dot{u}||^2_{L^2}+C||P||^4_{L^4}+C.
\end{aligned}\end{equation}

It follows from \eqref{12.5},~\eqref{18.99}~and \eqref{12.3}, \eqref{991} that
\begin{equation}\begin{aligned}\label{788}
&\frac{1}{2}\frac{d}{dt}\int_{\Omega}\rho|\dot{u}|^2dx+\mu\int_{\Omega}(\text{div}\dot{u})^2dx+\mu\int_{\Omega}(\text{curl}\dot{u})^2dx\\
\leq&-\frac{d}{dt}\int_{\partial\Omega}F(u\cdot{\nabla n}\cdot{u})ds+C\epsilon\|\nabla\dot{u}||^2_{L^2}\\
&+C\|\sqrt{\rho}\dot{u}||^2_{L^2}(1+\|\sqrt{\rho}\dot{u}||_{L^2}+||P||^2_{L^4})+||P||^4_{L^4}.
\end{aligned}\end{equation}

It remains to estimate $\sup_{0\leq t\leq T}||P||_{L^4}.$ Multiplying $\eqref{FNSZ}_3$ by $4P^3,$ and integrating over $\Omega,$ together with \eqref{991}
and \eqref{666}, we obtain
\begin{equation}\begin{aligned}\label{2021}
\frac{d}{dt}\int_{\Omega}P^4dx=&4\int_{\Omega}\left(\frac{\mu}{2}\left|\nabla u+(\nabla
u)^{\text{tr}}\right|^2+\lambda(\text{div}u)^2-\frac{7}{4}P\text{div}u\right)P^3dx\\
\leq&C||P||_{L^\infty}\left(\|\nabla u||^4_{L^4}+||P||^4_{L^4}\right)\\
\leq&C||P||_{L^\infty}\left(\|\sqrt{\rho}\dot{u}||^2_{L^2}+||P||^4_{L^4}+1\right).
\end{aligned}\end{equation}
Combining \eqref{788}, \eqref{2021}, and choosing~$\epsilon$~suitably small yields
\begin{equation}\begin{aligned}\label{18.100}
&\frac{d}{dt}\int_{\Omega}\left(\rho|\dot{u}|^2+P^4\right)dx+\mu\int_{\Omega}|\nabla\dot{u}|^2dx\\
\leq&-\frac{d}{dt}\int_{\partial\Omega}F(u\cdot{\nabla
n}\cdot{u})ds+C(1+||P||_{L^\infty}+\|\sqrt{\rho}\dot{u}||^2_{L^2})\left(\|\sqrt{\rho}\dot{u}||^2_{L^2}+||P||^4_{L^4}+1\right).
\end{aligned}\end{equation}
To estimate the first term on the right hand side of \eqref{18.100}, we note that
\begin{equation}\begin{aligned}\label{19.144}
\left|\int_{\partial\Omega}F(u\cdot{\nabla u}\cdot{n})ds\right|=&\left|\int_{\partial\Omega}F(u\cdot{\nabla n}\cdot{u})ds\right|\\
\leq&C||F||_{L^2(\partial\Omega)}||u||^2_{L^4(\partial\Omega)}\\
\leq&C||F||_{H^1(\Omega)}\|\nabla u||^2_{L^2(\Omega)}\\
\leq&C(||F||_{L^2(\Omega)}+\|\nabla F||_{L^2(\Omega)})\\
\leq&C(1+\epsilon\|\sqrt\rho\dot{u}||^2_{L^2(\Omega)}).
\end{aligned}\end{equation}
due to \eqref{13.1}, \eqref{18.1} and \eqref{12.2}.
\par
Putting \eqref{19.144} into \eqref{18.100} and integrating the resulting equation over $[0,t]$ lead to
\begin{equation}\begin{aligned}
&\int_{\Omega}\left(\rho|\dot{u}|^2+P^4\right)dx+\mu\int_{0}^{t}\int_{\Omega}|\nabla\dot{u}|^2dx\\
&\leq(1+\epsilon\int_{\Omega}\rho|\dot{u}|^2dx)+\int_{0}^{t}(1+||P||_{L^\infty}+\|\sqrt{\rho}\dot{u}||^2_{L^2})\left(\|\sqrt{\rho}\dot{u}||^2_{L^2}+||P||^4_{L^4}\right).
\end{aligned}\end{equation}
Choosing $\epsilon$ suitably small, we obtain  that
\begin{equation}\begin{aligned}
&\int_{\Omega}\left(\rho|\dot{u}|^2+P^4\right)dx+\mu\int_{0}^{t}\int_{\Omega}|\nabla\dot{u}|^2dx\\
\leq&C+\int_{0}^{t}(1+||P||_{L^\infty}+\|\sqrt{\rho}\dot{u}||^2_{L^2})\left(\|\sqrt{\rho}\dot{u}||^2_{L^2}+||P||^4_{L^4}\right).
\end{aligned}\end{equation}
On account of~\eqref{12.1}, \eqref{12.2}~and Gronwall's inequality we have
\begin{equation}\begin{aligned}\label{19.19}
\sup_{0\leq t\leq T}\int_{\Omega}(\rho|\dot{u}|^2+P^4)dx+\int_{0}^{T}\int_{\Omega}|\nabla\dot{u}|^2dxdt\leq C.
\end{aligned}\end{equation}
The proof of Lemma \ref{BBA} is complete.
\end{proof}
\section{Improved regularity of the density and the pressure}
In this section, we will obtain improved regularity of the density and the pressure. To this end, we first state  the Beale-Kato-Majda type inequality which was first established in \cite{[ML]}.
\begin{lemma}\label{FW}\cite{{[JM]},{[ML]}}For ~$2<q<\infty$~,assume that ~$u\in W^{2,q}(\Omega)$~with~$u\cdot{n}=0$~and $\text{curl} u=0 $
on~$\partial\Omega.$~Then there is a constant C=C(q) such that the following estimate holds
\begin{equation}\begin{aligned}\label{11.4}
\|\nabla u||_{L^\infty}\leq C(\|\text{div}~u||_{L^\infty}+\|\text{curl}~u||_{L^\infty})\log(e+\|\nabla^2 u||_{L^q})+C\|\nabla u||_{L^2}+C.
\end{aligned}\end{equation}
\end{lemma}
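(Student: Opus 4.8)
The plan is to reconstruct $\nabla u$ from $d:=\operatorname{div}u$ and $\omega:=\operatorname{curl}u$ via an elliptic representation and then run the classical Beale--Kato--Majda splitting argument, optimising the cut-off radius. Since $\Omega$ is simply connected and $u\cdot n=0$, $\omega=0$ on $\partial\Omega$, the data $(d,\omega)$ together with the vanishing normal trace determines $u$ (this is the div--curl system underlying Lemma \ref{LAJ}); using the two-dimensional identity $\Delta u=\nabla d+\nabla^{\perp}\omega$ and the solvability of that system, I would write $\nabla u=T_{1}d+T_{2}\omega+\mathcal{R}u$, where $T_{1},T_{2}$ are Calder\'on--Zygmund-type operators whose kernels are built from the second derivatives of the fundamental solution $-\frac{1}{2\pi}\log|x|$ of $-\Delta$ (plus boundary correctors), and $\mathcal{R}u$ gathers the harmonic/boundary contribution, which should be bounded by $C\|\nabla u\|_{L^{2}}$. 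The slip conditions $u\cdot n=0$ and $\operatorname{curl}u=0$ are precisely what make this solvability and the correct $L^{p}$ regularity work.

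For the singular part, I would fix $x_{0}\in\Omega$ and a radius $\rho\in(0,\operatorname{diam}\Omega]$ and, using the cancellation of the kernel $|K(z)|\le C|z|^{-2}$, write $T_{1}d(x_{0})=\int_{|x_{0}-y|<\rho}K(x_{0}-y)\big(d(y)-d(x_{0})\big)\,dy+\int_{\{y\in\Omega:\,|x_{0}-y|\ge\rho\}}K(x_{0}-y)\,d(y)\,dy$. On $\{|x_{0}-y|<\rho\}$, since $q>2$ Morrey's inequality gives $|d(y)-d(x_{0})|\le C\|\nabla d\|_{L^{q}}|x_{0}-y|^{1-2/q}$, so that piece is bounded by $C\|\nabla d\|_{L^{q}}\rho^{1-2/q}\le C\|\nabla^{2}u\|_{L^{q}}\rho^{1-2/q}$ (the kernel is integrable because the radial exponent is $-2+(1-2/q)+1=-2/q>-1$). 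On $\{|x_{0}-y|\ge\rho\}$, which sits inside an annulus of outer radius $\operatorname{diam}\Omega$, the crude bound yields $C\|d\|_{L^{\infty}}\log\!\big(e+\operatorname{diam}\Omega/\rho\big)$. The term $T_{2}\omega$ is treated the same way with $\omega$ in place of $d$.

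Finally I would choose $\rho\simeq(\|\operatorname{div}u\|_{L^{\infty}}/\|\nabla^{2}u\|_{L^{q}})^{q/(q-2)}$ when this lies in $(0,\operatorname{diam}\Omega]$, and $\rho=\operatorname{diam}\Omega$ otherwise; this balances the two pieces and gives $\|T_{1}d\|_{L^{\infty}}\le C\|\operatorname{div}u\|_{L^{\infty}}\log(e+\|\nabla^{2}u\|_{L^{q}})+C$, the additive constant absorbing harmless quantities such as $\|\operatorname{div}u\|_{L^{\infty}}\log^{+}(1/\|\operatorname{div}u\|_{L^{\infty}})$, which is uniformly bounded. Combining this with the corresponding bound for $T_{2}\omega$, the estimate $\|\mathcal{R}u\|_{L^{\infty}}\le C\|\nabla u\|_{L^{2}}$, and the fact that $\log(e+\|\nabla^{2}u\|_{L^{q}})\ge1$ (used to absorb the leftover terms linear in $\|\operatorname{div}u\|_{L^{\infty}}$ and $\|\operatorname{curl}u\|_{L^{\infty}}$), one arrives at \eqref{11.4}. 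An essentially equivalent route is to first prove the $L^{\infty}$--$BMO$ bound $\|\nabla u\|_{BMO}\le C(\|\operatorname{div}u\|_{L^{\infty}}+\|\operatorname{curl}u\|_{L^{\infty}})+C\|\nabla u\|_{L^{2}}$ from the same representation and then apply the Brezis--Gallou\"et / Kozono--Taniuchi logarithmic inequality $\|f\|_{L^{\infty}}\le C\big(1+\|f\|_{BMO}\log(e+\|f\|_{W^{1,q}})\big)$ with $f=\nabla u$, bounding $\|\nabla u\|_{L^{q}}$ inside the logarithm by interpolating between $\|\nabla u\|_{L^{2}}$ and $\|\nabla^{2}u\|_{L^{q}}$. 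In either approach the main obstacle is to control the boundary correctors/harmonic part so that they cost only $C\|\nabla u\|_{L^{2}}$; this is where the geometry of $\partial\Omega$ and the slip conditions genuinely enter, and it is the reason the inequality is quoted from \cite{[JM],[ML]} here.
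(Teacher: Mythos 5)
The paper gives no proof of this lemma: it is stated with the citation \cite{[JM],[ML]} and left as a black box, so there is no in-paper argument to compare against. Your sketch is therefore necessarily a reconstruction of what a proof in those references would look like, and as such it is reasonable and follows the standard Beale--Kato--Majda template adapted to the div--curl system. The splitting at radius $\rho$, the Morrey/H\"older bound $|d(y)-d(x_0)|\le C\|\nabla d\|_{L^q}|x_0-y|^{1-2/q}$ for $q>2$, the resulting $\rho^{1-2/q}\|\nabla^2 u\|_{L^q}$ on the near field, the $\|d\|_{L^\infty}\log(\operatorname{diam}\Omega/\rho)$ on the far field, and the optimisation of $\rho$ with the $x\log(1/x)$-is-bounded absorption of leftover terms are all correct and are exactly the standard ingredients. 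Your identity $\Delta u=\nabla d+\nabla^{\perp}\omega$ also matches the paper's conventions (see its equation \eqref{10.2}).

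The one genuine gap in what you wrote is the representation $\nabla u = T_1 d + T_2\omega + \mathcal{R}u$ itself: you assert the existence of Calder\'on--Zygmund operators with the leading kernel of $\nabla^2(-\Delta)^{-1}$ plus ``boundary correctors,'' and that the harmonic/boundary remainder is controlled by $C\|\nabla u\|_{L^2}$, but you do not construct these objects or justify the remainder bound. This is the part where the slip conditions $u\cdot n=0$, $\operatorname{curl}u=0$ and the geometry of $\partial\Omega$ actually do work, and it is the substantive content hidden behind the citation. You do acknowledge this explicitly at the end of your sketch, which is the honest thing to do, but a self-contained proof would need to flesh it out (e.g.\ via the Helmholtz/div--curl solvability that underlies Lemma \ref{LAJ}, or by reducing to the Neumann problem for $\operatorname{div}u$ and the Dirichlet problem for $\operatorname{curl}u$ as the paper does for $F$ and $w$ in \eqref{LAF}--\eqref{FAL}). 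Your alternative route via a $BMO$ estimate plus the Brezis--Gallou\"et/Kozono--Taniuchi logarithmic inequality is also a known and valid path and shifts the same difficulty to establishing $\|\nabla u\|_{BMO}\lesssim\|d\|_{L^\infty}+\|\omega\|_{L^\infty}+\|\nabla u\|_{L^2}$ on the bounded domain. Overall: consistent with what the references are expected to contain, correctly structured, but incomplete precisely where the lemma is nontrivial.
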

Thanks to the estimates obtained in section 2, we will show further estimates on $\nabla\rho$ and $\nabla P$ which are needed to extend the local strong solution
to be a global one.
\begin{lemma}\label{LGZ}Suppose that \eqref{12.1} is valid. Then there holds that
\begin{equation}\begin{aligned}
\sup_{0\leq t\leq T}(\|\rho||_{W^{1,q}}+||P||_{W^{1,q}}+\|\nabla u||_{H^1})\leq C, \quad{q>2}
\end{aligned}\end{equation}
for any $0\in[0,T^*)$.
\end{lemma}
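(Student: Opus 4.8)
The plan is to promote the estimates of Section~2 into bounds for $\nabla\rho$ and $\nabla P$ by running a logarithmic Gronwall inequality; once $\|\nabla\rho\|_{L^\infty(0,T;L^q)}+\|\nabla P\|_{L^\infty(0,T;L^q)}$ is under control, the remaining quantities follow by elliptic regularity and interpolation. The two main ingredients are a $W^{2,q}$ bound for the velocity, coming from the elliptic problems \eqref{LAF}--\eqref{FAL} for $F$ and $w$ together with Lemma~\ref{BBA}, and the Beale--Kato--Majda inequality of Lemma~\ref{FW}.

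First I would record the $L^q$ elliptic estimate $\|\nabla F\|_{L^q}+\|\nabla w\|_{L^q}\le C\|\rho\dot u\|_{L^q}$ and, using $u\cdot n=0$, $\text{curl}\,u=0$ on $\partial\Omega$ and the $W^{2,q}$ version of Lemma~\ref{LAJ} together with $\nabla\text{div}\,u=\frac{1}{2\mu+\lambda}(\nabla F+\nabla P)$, deduce $\|\nabla^2 u\|_{L^q}\le C(\|\rho\dot u\|_{L^q}+\|\nabla P\|_{L^q}+\|\nabla u\|_{L^q})$, the last term being already controlled by \eqref{184}, Lemma~\ref{ZZY} and Lemma~\ref{BBA}. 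For $\|\rho\dot u\|_{L^q}$ I would interpolate between $L^2$, where $\|\rho\dot u\|_{L^2}\le M^{1/2}\|\sqrt\rho\dot u\|_{L^2}$ is bounded by Lemma~\ref{BBA}, and $L^{q^2}$, where $\|\rho\dot u\|_{L^{q^2}}\le M\|\dot u\|_{L^{q^2}}\le C(1+\|\nabla\dot u\|_{L^2})$ by \eqref{10.9} and Lemma~\ref{ZZY}; the interpolation exponent is exactly $\beta$, producing \eqref{11.17}, namely $\|\nabla^2 u\|_{L^q}\le C(\|\nabla\dot u\|_{L^2}^{\beta}+\|\nabla P\|_{L^q})$ with $\beta=\frac{q(q-2)}{q^2-2}\in(0,1)$. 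Feeding the elliptic bounds and a Gagliardo--Nirenberg estimate of $\|F\|_{L^\infty},\|w\|_{L^\infty}$ into Lemma~\ref{FW} then gives $\|\nabla u\|_{L^\infty}\le C(1+\|\nabla\dot u\|_{L^2}^{\gamma}+\|P\|_{L^\infty})\log(e+\|\nabla^2 u\|_{L^q})+C\|\nabla u\|_{L^2}+C$, with $\gamma=\frac{q}{2(q-1)}\beta=\frac{q^2(q-2)}{2(q-1)(q^2-2)}\in(0,1)$, and one checks $\gamma+\beta<2$.

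Next I would differentiate $\eqref{FNSZ}_1$ and the pressure equation \eqref{18.2}, test against $|\nabla\rho|^{q-2}\nabla\rho$ and $|\nabla P|^{q-2}\nabla P$ respectively (the boundary contributions vanish since $u\cdot n=0$ on $\partial\Omega$), and use $\nabla\text{div}\,u=\frac{1}{2\mu+\lambda}(\nabla F+\nabla P)$ and $\nabla G\sim\nabla u\,\nabla^2 u$ to obtain $\frac{d}{dt}\|\nabla\rho\|_{L^q}\le C\|\nabla u\|_{L^\infty}\|\nabla\rho\|_{L^q}+C\|\rho\dot u\|_{L^q}+C\|\nabla P\|_{L^q}$ and \eqref{190}. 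Writing $\Phi(t)=e+\|\nabla\rho\|_{L^q}+\|\nabla P\|_{L^q}$ and substituting the two ingredients, everything reduces to controlling the product $\|\nabla u\|_{L^\infty}\|\nabla^2 u\|_{L^q}$, which as in \eqref{12.350}, after splitting off the logarithm by Young's inequality, is bounded by $A(t)\Phi\log\Phi+C\|\nabla\dot u\|_{L^2}^2+C\|P\|_{L^\infty}^{p_0}+C$; collecting all terms gives $\frac{d}{dt}\Phi\le A(t)\Phi\log\Phi+B(t)$ with $A,B\in L^1(0,T)$ thanks to Lemma~\ref{BBA} ($\int_0^T\|\nabla\dot u\|_{L^2}^2\,dt<\infty$, together with $\gamma<2$) and the contradiction hypothesis \eqref{12.1} ($\int_0^T\|P\|_{L^\infty}^{p_0}\,dt\le M^{p_0}$, $p_0>1$). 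Since $\Phi\ge e$ one has $\frac{d}{dt}\log\log\Phi\le A+B$, so $\rho_0,P_0\in W^{1,q}$ gives $\sup_{[0,T]}\Phi\le C$; in particular $\|\nabla u\|_{L^1(0,T;L^\infty)}<\infty$. Finally $\|\nabla u\|_{H^1}\le C$ follows from $\|\nabla^2 u\|_{L^2}\le C(\|\rho\dot u\|_{L^2}+\|\nabla P\|_{L^2}+\|\nabla u\|_{L^2})$ and $L^q\hookrightarrow L^2$, while $\|\rho\|_{L^q}\le C$ comes from mass conservation and $\|\rho\|_{L^\infty}\le M$, and $\|P\|_{L^q},\|P\|_{L^\infty}\le C$ from $W^{1,q}\hookrightarrow L^\infty$.

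The step I expect to be the main obstacle is the balancing hidden in \eqref{12.350}. The product $\|\nabla u\|_{L^\infty}\|\nabla^2 u\|_{L^q}$, after Young's inequality strips off the logarithm, produces a power $\gamma+\beta$ of $\|\nabla\dot u\|_{L^2}$, which must be strictly below $2$ so that it is absorbed by $\int_0^T\|\nabla\dot u\|_{L^2}^2\,dt\le C$; and the same Young's inequality produces a power $\frac{2}{2-\beta}$ of $\|P\|_{L^\infty}$ which must not exceed $p_0$. Since $\beta\to0$ as $q\to2^+$, the latter forces one to prove the estimate first for $q$ sufficiently close to $2$ (depending on $p_0$), and it is precisely here that $p_0>1$ enters, making $\int_0^T\|P\|_{L^\infty}^{p_0}\,dt$ finite under \eqref{12.1}; the value of $q$ in the statement is then recovered by propagating along the transport equations once $\|\nabla u\|_{L^1(0,T;L^\infty)}<\infty$ is established. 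The rest is a routine continuation argument.
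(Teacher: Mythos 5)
Your proposal follows essentially the same route as the paper's proof: the elliptic estimate $\|\nabla^2 u\|_{L^q}\le C(\|\rho\dot u\|_{L^q}+\|\nabla P\|_{L^q})$ via the Neumann/Dirichlet problems for $F$ and $w$, the interpolation of $\|\rho\dot u\|_{L^q}$ between $L^2$ and $L^{q^2}$ giving the exponent $\beta=\frac{q(q-2)}{q^2-2}$ (the paper's \eqref{2341}), the Beale--Kato--Majda bound with exponent $\gamma=\frac{q^2(q-2)}{2(q-1)(q^2-2)}$ (the paper's \eqref{11.16}--\eqref{345}), and finally the logarithmic Gronwall for $\Phi=e+\|\nabla\rho\|_{L^q}+\|\nabla P\|_{L^q}$, followed by the $r=2$ and $\|\nabla^2 u\|_{L^2}$ estimates. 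These steps and exponents match the paper verbatim.

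What you add in your final paragraph is worth highlighting because it exposes an arithmetic constraint the paper elides. When one expands the coefficient $(1+\|\nabla\dot u\|_{L^2}^{\gamma}+\|P\|_{L^\infty})(1+\|\nabla\dot u\|_{L^2}^{\beta})$ in the log-Gronwall, the cross term $\|P\|_{L^\infty}\|\nabla\dot u\|_{L^2}^{\beta}$ forces via Young's inequality a factor $\|P\|_{L^\infty}^{2/(2-\beta)}$, so the paper's displayed inequality with $\|P\|_{L^\infty}^{p_0}$ actually requires $p_0\ge\frac{2}{2-\beta(q)}=\frac{2(q^2-2)}{q^2+2q-4}$, a number in $(1,2)$ that tends to $2$ as $q\to\infty$. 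The paper's phrase ``for some $p_0\in(1,2]$'' is consistent with this, since one can take $p_0=p_0(q)$, but the constraint is never stated. Your bootstrap resolves it more cleanly than the paper: first run the argument at some $q'\in(2,q]$ close enough to $2$ that $\frac{2}{2-\beta(q')}\le p_0$, obtain $\sup_t\|\nabla P\|_{L^{q'}}\le C$, and then note that $W^{1,q'}\hookrightarrow L^\infty$ gives $\sup_t\|P\|_{L^\infty}\le C$ outright; in the second pass at the original $q$ the Young's split on $\|P\|_{L^\infty}$ is no longer needed at all, since $\gamma(q)+\beta(q)<3/2<2$ handles the $\|\nabla\dot u\|_{L^2}$ powers alone. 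This removes the $q$-dependence and yields the criterion for every $p_0>1$, slightly sharper than what the paper's direct route gives. The one caveat: the lever that makes your propagation step work is precisely the embedding-provided $L^\infty_t L^\infty_x$ bound on $P$ after Step~1, not the $L^1_tL^\infty_x$ bound on $\nabla u$ by itself --- if you cite only the latter, the $\|P\|_{L^\infty}\|\nabla\dot u\|_{L^2}^{\beta}$ cross term reappears with the same Young exponent. Make that explicit when you write it up.
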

\begin{proof}
For $r\in[2,q]$, note that~$|\nabla\rho|^r$~satisfies
\begin{equation}\begin{aligned}\label{10.8}
&(|\nabla\rho|^r)_t+\text{div}(|\nabla\rho|^r u)+(r-1)|\nabla\rho|^r\text{div}u+r|\nabla\rho|^{r-2}(\nabla\rho)^{tr}\nabla u(\nabla\rho)\\
&+r\rho|\nabla\rho|^{r-2}\nabla\rho\cdot{\nabla\text{div}u}=0.
\end{aligned}\end{equation}
Integrating the resulting equation over~$\Omega,$ we obtain after integration by parts and using boundary condition~\eqref{666}~that
\begin{equation}\begin{aligned}\label{88.2}
\frac{d}{dt}\|\nabla\rho||_{L^r}\leq C(1+\|\nabla u||_{L^\infty})\|\nabla\rho||_{L^r}+C\|\nabla^2u||_{L^r}.
\end{aligned}\end{equation}
Applying $\nabla$ to $\eqref{FNSZ}_{3}$ and multiplying the resulting equation by $r|\nabla P|^{r-2}\nabla P,$ we have
\begin{equation}\begin{aligned}\label{542}
&(|\nabla P|^r)_t+\text{div}(|\nabla P|^r u)+(2r-1)|\nabla P|^r\text{div}u+r|\nabla P|^{r-2}(\nabla P)^{tr}\nabla u(\nabla P)\\
+&2rP|\nabla P|^{r-2}\nabla P\cdot{\nabla \text{div}u}-2r\mu|\nabla P|^{r-2}\nabla P\cdot{\nabla(\frac{\mu}{2}\left|\nabla u+(\nabla
u)^{\text{tr}}\right|^2)}\\
&-\lambda r|\nabla P|^{r-2}\nabla P\cdot{\nabla (\text{div}u)^2}=0.
\end{aligned}\end{equation}
Integrating \eqref{542} over $\Omega$ and  using boundary condition~\eqref{666}, we then get
\begin{equation}\begin{aligned}\label{1129}
\frac{d}{dt}\|\nabla P||_{L^r}\leq& C(||P||_{L^\infty}+\|\nabla u||_{L^\infty})(\|\nabla P||_{L^r}+\|\nabla^2 u||_{L^r}).\\
\end{aligned}\end{equation}
In fact, the standard $L^p$ estimate for elliptic system with boundary condition \eqref{666}, \eqref{18.1} yields
\begin{equation}\begin{aligned}\label{11.07}
\|\nabla^2 u||_{L^r}\leq& C(\|\nabla\text{div}u||_{L^r}+\|\nabla w||_{L^r})\\
\leq&C\left(\|\nabla F||_{L^r}+\|\nabla P||_{L^r}+\|\nabla w||_{L^r}\right)\\
\leq&C(\|\rho\dot{u}||_{L^r}+\|\nabla P||_{L^r}).\\
\end{aligned}\end{equation}
It follows from \eqref{10.9}, \eqref{201.3} that
\begin{equation}\begin{aligned}\label{2341}
\|\rho\dot{u}||_{L^q}\leq& C\|\rho\dot{u}||^{\frac{2(q-1)}{q^2-2}}_{L^2}\|\rho\dot{u}||^{\frac{q(q-2)}{q^2-2}}_{L^{q^2}}\\
\leq&C\|\rho\dot{u}||^{\frac{2(q-1)}{q^2-2}}_{L^2}\|\dot{u}||^{\frac{q(q-2)}{q^2-2}}_{L^{q^2}}\\
\leq&C\|\rho\dot{u}||^{\frac{2(q-1)}{q^2-2}}_{L^2}\|\dot{u}||^{\frac{q(q-2)}{q^2-2}}_{H^1}\\
\leq&C\|\rho\dot{u}||^{\frac{2(q-1)}{q^2-2}}_{L^2}\left(\|\dot{u}||^{\frac{q(q-2)}{q^2-2}}_{L^2}+\|\nabla\dot{u}||^{\frac{q(q-2)}{q^2-2}}_{L^2}\right)\\
\leq&C\|\rho\dot{u}||^{\frac{2(q-1)}{q^2-2}}_{L^2}\left(\|\nabla u||^{\frac{2q(q-2)}{q^2-2}}_{L^2}+\|\nabla\dot{u}||^{\frac{q(q-2)}{q^2-2}}_{L^2}\right)\\
\leq&C\|\rho\dot{u}||_{L^2}+C\|\rho\dot{u}||^{\frac{2(q-1)}{q^2-2}}_{L^2}\|\nabla\dot{u}||^{\frac{q(q-2)}{q^2-2}}_{L^2}+C\\
\leq&C\left(1+\|\nabla\dot{u}||^{\frac{q(q-2)}{q^2-2}}_{L^2}\right).
\end{aligned}\end{equation}
Next, one gets from the Gagliardo-Nirenberg inequality,~\eqref{18.1}, \eqref{2341} that
\begin{equation}\begin{aligned}\label{11.16}
&\|\text{div}u||_{L^\infty}+\|\text{curl} u||_{L^\infty}\\
\leq& C(||F||_{L^\infty}+||P||_{L^\infty})+C\|\text{curl} u||_{L^\infty}\\
\leq&C\left(||F||^{\frac{q-2}{2(q-1)}}_{L^2}\|\nabla F||^{\frac{q}{2(q-1)}}_{L^q}+||w||^{\frac{q-2}{2(q-1)}}_{L^2}\|\nabla
w||^{\frac{q}{2(q-1)}}_{L^q}+||P||_{L^\infty}+||F||_{L^2}\right)\\
\leq&C\left(1+\|\rho\dot{u}||^{\frac{q}{2(q-1)}}_{L^q}+||P||_{L^\infty}\right)\\
\leq&C\left(1+\|\nabla\dot{u}||^{\frac{q^2(q-2)}{2(q-1)(q^2-2)}}_{L^2}+||P||_{L^\infty}\right),
\end{aligned}\end{equation}
which together with Lemma \ref{FW}, \eqref{11.16}, \eqref{11.07} and \eqref{2341} yields that
\begin{equation}\begin{aligned}\label{345}
&\|\nabla u||_{L^\infty}\leq C(\|\text{div}u||_{L^\infty}+\|\text{curl} u||_{L^\infty})\log(e+\|\nabla^2 u||_{L^q})+C\|\nabla u||_{L^2}+C\\
&\leq
C\left(1+\|\nabla\dot{u}||^{\frac{q^2(q-2)}{2(q-1)(q^2-2)}}_{L^2}+||P||_{L^\infty}\right)\log\left(e+\|\nabla\dot{u}||^{\frac{q(q-2)}{q^2-2}}_{L^2}+\|\nabla
P||_{L^q}\right)+C\|\nabla u||_{L^2}+C.
\end{aligned}\end{equation}
Set~r=q in \eqref{1129}, together with \eqref{11.07}, \eqref{345}, we have
\begin{equation}\begin{aligned}\label{549}
&\frac{d}{dt}\|\nabla P||_{L^q}\\
\leq& C(||P||_{L^\infty}+\|\nabla u||_{L^\infty})(\|\nabla P||_{L^q}+\|\nabla^2 u||_{L^q})\\
\leq&C(\|\nabla u||_{L^\infty}+||P||_{L^\infty})(\|\nabla\dot{u}||^{\frac{2(q-1)}{q^2-2}}_{L^2}+\|\nabla P||_{L^q})\\
\leq&C\left(1+\|\nabla\dot{u}||^{\frac{q^2(q-2)}{2(q-1)(q^2-2)}}_{L^2}+||P||_{L^\infty}\right)\log\left(e+\|\nabla\dot{u}||^{\frac{q(q-2)}{q^2-2}}_{L^2}+\|\nabla
P||_{L^q}\right)\left(\|\nabla\dot{u}||^{\frac{q(q-2)}{q^2-2}}_{L^2}+\|\nabla P||_{L^q}+1\right).
\end{aligned}\end{equation}
Similarly, set r=q in \eqref{88.2}, we get
\begin{equation}\begin{aligned}\label{1128}
&\frac{d}{dt}\|\nabla\rho||_{L^q}\leq C(1+\|\nabla\dot{u}||_{L^2}+||P||_{L^\infty})\|\nabla\rho||_{L^{q}}\log(e+\|\nabla P||_{L^q}+\|\nabla
\dot{u}||_{L^2})\\
&+C\|\nabla\dot{u}||_{L^2}+C\|\nabla P||_{L^q}+C.
\end{aligned}\end{equation}
Combining with \eqref{549}, \eqref{1128} gives that
\begin{equation}\begin{aligned}\nonumber
\frac{d}{dt}(e+\|\nabla\rho||_{L^q}+\|\nabla P||_{L^q})\leq C(1+||P||^{p_0}_{L^\infty}+\|\nabla\dot{u}||^2_{L^2})\log(e+\|\nabla\rho||_{L^q}+\|\nabla
P||_{L^q})(\|\nabla\rho||_{L^q}+\|\nabla P||_{L^q}),
\end{aligned}\end{equation}
where $1<p_0\leq 2.$
\par
Let
$$
f(t)\triangleq e+\|\nabla\rho||_{L^q}+\|\nabla P||_{L^q},g(t)\triangleq 1+\|\nabla\dot{u}||^2_{L^2}+||P||^{p_0}_{L^\infty},
$$
where $1<p_0\leq 2,$
which yields that
$$
(\log f(t))'\leq Cg(t)\log f(t).
$$
Thus, it follows from  Gronwall's inequality \eqref{12.1} and~\eqref{201.3}~that
\begin{equation}\begin{aligned}\label{390}
\sup_{0\leq t\leq T}(\|\nabla\rho||_{L^q}+\|\nabla P||_{L^q})\leq C,
\end{aligned}\end{equation}
which combining with \eqref{345}, \eqref{390} gives that
\begin{equation}\begin{aligned}\label{391}
&\int_{0}^{T}\|\nabla u||_{L^\infty}dt\leq C\int_{0}^{T}(\|\nabla\dot{u}||^2_{L^2}+||P||^{p_0}_{L^\infty}+1)\leq C,
\end{aligned}\end{equation}
where $1<p_0\leq 2.$
\par
Taking $r=2,$ it thus follows from \eqref{88.2}, \eqref{1129}, \eqref{11.07}, \eqref{391}, \eqref{201.3} that
\begin{equation}\begin{aligned}\label{88.1}
\sup_{0\leq t\leq T}(\|\nabla\rho||_{L^2}+\|\nabla P||_{L^2})\leq C.
\end{aligned}\end{equation}
According to~\eqref{18.1},~\eqref{88.1}, \eqref{201.3}~we get
\begin{equation}\begin{aligned}\label{3.2}
\|\nabla^2 u||_{L^2}\leq&C(\|\nabla\text{div}u||_{L^2}+\|\nabla w||_{L^2})\\
\leq&C(\|\nabla F||_{L^2}+\|\nabla w||_{L^2}+\|\nabla\rho||_{L^2})\\
\leq&C(\|\sqrt{\rho}\dot{u}||_{L^2}+\|\nabla\rho||_{L^2})\\
\leq&C.
\end{aligned}\end{equation}
This completes the proof of Lemma \ref{LGZ}.
\end{proof}
\begin{proof}[Proof of  Theorem \ref{WAW}]
We could use $(\rho, u,P)\left(x, T^{*}\right) \triangleq \lim _{t \rightarrow T^{*}}(\rho, u,P)(x, t)$ as the
 initial data. Thus,~$(\rho, u, P)(x, T^*)$~satisfy compatibility conditions. With the estimates in Lemma \ref{QQW}-Lemma \ref{LGZ} and local existence theory, we can extend the local strong solutions beyond~$T^{*}.$
 This contradicts the definition of~$T^{*}.$~Thus, we complete the proof of  Theorem \ref{WAW}.
\end{proof}
\section*{Ackonwledgments}
 Q. Jiu is partially
supported by National Natural Sciences Foundation of China (No. 11931010, No. 12061003).

	\end{document}